\newtheorem{thm}{Theorem}[section]
\newtheorem{cor}[thm]{Corollary}
\newtheorem{lemma}[thm]{Lemma}
\newtheorem{prop}[thm]{Proposition}
\newtheorem{conj}[thm]{Conjecture}
\theoremstyle{definition}
\newtheorem{remark}[thm]{Remark}
\numberwithin{equation}{section}
\numberwithin{figure}{section}
\def\al{\alpha}
\def\be{\beta}
\def\de{\delta}
\def\R{\mathbb{R}}
\def\C{\mathbb{C}}
\def\N{\mathbb{N}}
\def\Id{\boldsymbol{1}}
\def\lc{\mathrm{lc}}
\def\supp{\operatorname{supp}}
\title[A partial-sum deformation for a family of orthogonal polynomials]{A partial-sum deformation\\for a family of orthogonal polynomials}
\author{Erik Koelink}
\address{IMAPP, Radboud Universiteit, Nijmegen, The Netherlands}
\email{e.koelink@math.ru.nl}
\author{Pablo Rom\'an}
\address{FaMAF-CIEM, Universidad Nacional de C\'ordoba, Argentina}
\email{pablo.roman@unc.edu.ar}
\author{Wadim Zudilin}
\address{IMAPP, Radboud Universiteit, Nijmegen, The Netherlands}
\email{w.zudilin@math.ru.nl}
\date{\today}
\begin{document}

\dedicatory{Dedicated to Tom Koornwinder on the occasion of his 80th birthday}

\begin{abstract}
There are several questions one may ask about polynomials $q_m(x)=q_m(x;t)=\sum_{n=0}^mt^np_n(x)$ attached to a family of orthogonal polynomials $\{p_n(x)\}_{n\ge0}$.
In this note we draw attention to the naturalness of this partial-sum deformation and related beautiful structures.
In particular, we investigate the location and distribution of zeros of $q_m(x;t)$ in the case of varying real parameter $t$.
\end{abstract}

\maketitle


\section{Introduction}\label{sec:intro}

In the study of orthogonal polynomials and special functions a very useful tool is a generating function for the special functions at hand. Generating functions can be used to obtain e.g.\ asympotic results for the orthogonal polynomials or explicit identities for the special functions at hand. This paper is dedicated to Tom Koornwinder, who has championed use of generating functions to obtain orthogonality relations for $q$-series and orthogonal polynomials. In \cite{KoorS} Koornwinder and Swarttouw obtain $q$-analogs of the Hankel transform using generating functions. In \cite{CaglK} Cagliero and Koornwinder use generating functions to obtain inverses of infinite lower triangular matrices involving Jacobi polynomials. Similar matrices play an important role in weight functions for matrix-valued orthogonal polynomials, see \cite{KoeldlRR} for an example.

In our study  \cite{KRZ24} of zeros of polynomials related to matrix-valued orthogonal polynomials as introduced in \cite{KoeldlRR} we encountered a particular deformation method that was producing interesting objects worth of their own investigation. Essentially, we are studying partial sums of 
generating functions for orthogonal polynomials. These partial
sums can be viewed as deformations of the orthogonal polynomials.
In this paper we study this deformation in more detail. 
How much structure remains when a family of orthogonal polynomials is deformed in a `reasonably natural' way? Do new structure(s) appear?
Of course, the answer to these ill-posed questions depend on many factors but mainly on the \emph{family} itself and the \emph{way} it is deformed.
At the same time the existing literature revealed to us only a few and very particular examples, so we attempted to understand the new structures and phenomena arising ourselves.
The goal of this note is to present our findings publicly, in a conventional text format (in particular, leaving some amazing animation out).

The setup for our deformation is quite simple.
We start with a family $\{p_n(x)\}_{n\ge0}$ of polynomials which are
orthogonal with respect to nontrivial probability measure supported on the real line.
We introduce a \emph{positive real} parameter $t$%
\footnote{One can also deal with negative real $t$ along the lines. We omit related considerations for simplicity of our presentation.}
and assign to it another set of polynomials
\[
q_m(x;t) = \sum_{n=0}^m t^n p_n(x) \quad\text{for}\; m=0,1,2,\dots,
\]
which can be thought as truncations (or partial sums) of the generating function
\[
q(x;t) = q_\infty(x;t) = \sum_{n=0}^\infty t^n p_n(x)
\]
for the original family.
It is well known that the \emph{zeros} of polynomials $p_n(x)$ are real, simple and located
in the interior of the convex hull of the support of the measure;
furthermore, the zeros of two consecutive polynomials $p_{n-1}(x)$ and $p_n(x)$ interlace: any interval connecting two neighbouring zeros of the latter contains exactly one zero of the former.
An experienced analyst will suspect that these basic properties of orthogonal polynomials are inherited by their newly created mates when $t$~is chosen to be sufficiently large.
Indeed, the latter setting means that the term $t^mp_m(x)$ dominates in the expression for $q_m(x;t)$\,---\,in what follows we convert this into a rigorous argument.
How large this $t$ should be? What is the shape of the range for $t$, for which the above properties (or some of them) are still valid?
Based on extended numerical experiment we expect (however are not able to prove, even for a particular choice of a family $p_n(x)$) this range to be always of simple shape $t>t_{\textrm{crit}}$ with the value of $t_{\textrm{crit}}$ depending on~$m$.
What is the asymptotics of such $t_{\textrm{crit}}(m)$ as $m\to\infty$?
What happens to the zeros of $q_m(x;t)$ when $t<t_{\textrm{crit}}(m)$?
What is their limiting distribution as $t\to0^+$?
We address these and related questions, mainly from a numerical perspective, to convince the reader in the existence of new interesting structures going far beyond the orthogonality.

Less surprisingly, perhaps, is that the three-term recurrence relations for orthogonal polynomials $p_n(x)$ reflect on their partial sums $q_m(x;t)$; we record the resulting \emph{four-term recurrence relation} in Section~\ref{sec:generalsetup} below.
We also demonstrate that the knowledge of the generating function $q(x;t)$ gives a simple access to the generating function of polynomials $q_m(x;t)$.
Section~\ref{sec:genzerospartialsums} focuses on the structure of zeros of partial sums $q_m(x;t)$ in generic situations; it culminates in Conjecture~\ref{conj:criticalvaluet} about the turning value of $t>0$ after which the zeros become not real-valued.
In Section \ref{sec:examples} we illustrate our theoretical findings and expectations on particular examples of partials sums of Hermite, Charlier and Lommel polynomials as diverse representatives of families of orthogonal polynomials.
In this section we draw some pictures but also (draw) attention to a limiting distribution of zeros of partial sums of Hermite polynomials, namely, its link with the Szeg\H{o} curve.
Our brief Section~\ref{sec:5} speculates on further connections with other deformations.

\subsection*{Acknowledgements}
We are truly thankful to Andrei Mart\'inez-Finkelshtein for discussions about the zeros of special polynomials.
We thank the anonymous referees for enthusiastic reports and valuable comments that we tried to address in the final version.

\section{General setup}\label{sec:generalsetup}


Let $\{p_n(x)\}_{n=0}^\infty$ be a set of polynomials such that $p_n$ has degree $n$ and there exists a three-term recurrence relation
\begin{equation}\label{eq:gen3termrecrelgenOP}
x\, p_n(x) = a_n\, p_{n+1}(x) + b_n\, p_n(x) 
+ c_n\, p_{n-1}(x),
\quad\text{where}\; n\ge0,
\end{equation}
with the initial values $p_{-1}(x)=0$, $p_0(x)=1$ and 
\emph{real} sequences $(a_n)_{n=0}^\infty$,  
$(b_n)_{n=0}^\infty$ and $(c_n)_{n=1}^\infty$. We assume $a_n\ne0$ for all $n\in \N$, so that the polynomials $p_n(x)$ are determined by \eqref{eq:gen3termrecrelgenOP} and the initial conditions. 
We are interested in the partial sums of the generating function $q(x;t)=\sum_{n=0}^\infty t^n \al_n p_n(x)$ and we assume that the normalisation factors satisfy $\al_n\in \R\setminus\{0\}$ for all $n\in \N$ and $\al_0=1$. We denote by 
\begin{equation}\label{eq:genpartialsumgenfunction}
q_m(x;t) = \sum_{n=0}^m t^n \, \al_n\, p_n(x)
\end{equation}
these partial sums. We assume $t\ne0$, otherwise 
$q_m(x;0)=1$ is trivial, and typically we assume $t>0$. Note that there is ambiguity in the normalisation, since we have general
constants in \eqref{eq:gen3termrecrelgenOP} and an arbitrary sequence $\al_n$ in the partial sum of the generating function. One natural choice is to make the polynomials monic, i.e.\ $a_n=1$ for all $n$, or, assuming $a_{n-1}c_n>0$ for all $n\geq 1$, to normalise $c_n=a_{n-1}$ and the polynomials being orthonormal with respect to a positive measure $\mu$ on the real line. 

For $k\in \N$ we also require the $k$-th associated polynomials 
$p_n^{(k)}$, which have degree $n$ and satisfy
\begin{equation}\label{eq:gen3termrecrelgenassociatedOP}
x\, p_n^{(k)}(x) = a_{n+k}\, p_{n+1}^{(k)}(x) + b_{n+k}\, p_n^{(k)}(x) 
+ c_{n+k}\, p_{n-1}^{(k)}(x),
\quad\text{where}\; n\ge0,
\end{equation}
with the initial values $p_{-1}^{(k)}(x)=0$, $p_0^{(k)}(x)=1$.

\begin{lemma}\label{lem:gen4termrecurforpartialsums}
The sequence $\{q_m\}_{m\in \N}$ of partial sums satisfies the  
four-term recursion:
\begin{align*}
&\frac{a_m}{\al_{m+1}} q_{m+1}(x;t) 
+ \frac{t\al_{m+1}(b_m-x)-a_m\al_m}{\al_m\al_{m+1}}
q_m(x;t) \\ &\qquad
+t \frac{t c_m\al_m-\al_{m-1}(b_m-x)}{\al_{m-1}\al_m} q_{m-1}(x;t) - \frac{t^2 c_m}{\al_{m-1}} q_{m-2}(x;t)=0,
\quad\text{where}\; m\ge0,
\end{align*}
with initial values $q_{-2}(x;t) = q_{-1}(x;t)=0$,
$q_0(x;t)=1$.
\end{lemma}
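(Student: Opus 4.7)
The starting observation is the telescoping identity
\[
t^m\alpha_m\,p_m(x) \;=\; q_m(x;t)-q_{m-1}(x;t),
\]
which follows directly from the definition \eqref{eq:genpartialsumgenfunction}. My plan is to turn the three-term recurrence \eqref{eq:gen3termrecrelgenOP} into a statement purely about the partial sums by substituting this identity for $p_{m-1}$, $p_m$ and $p_{m+1}$.

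First I rewrite \eqref{eq:gen3termrecrelgenOP} in the solved form
\[
a_m\,p_{m+1}(x) \;=\; (x-b_m)\,p_m(x) - c_m\,p_{m-1}(x),
\]
and then replace each $p_k(x)$ by $(q_k - q_{k-1})/(t^k\alpha_k)$, giving
\[
\frac{a_m}{t^{m+1}\alpha_{m+1}}\bigl(q_{m+1}-q_m\bigr)
\;=\; \frac{x-b_m}{t^m\alpha_m}\bigl(q_m-q_{m-1}\bigr)
-\frac{c_m}{t^{m-1}\alpha_{m-1}}\bigl(q_{m-1}-q_{m-2}\bigr).
\]
Multiplying through by $t^{m+1}$ clears all the powers of $t$ from the denominators, and collecting the coefficient of each $q_j$ produces the claimed four-term relation after straightforward simplification. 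In particular, the coefficient of $q_m$ combines $-a_m/\alpha_{m+1}$ with $t(b_m-x)/\alpha_m$ over the common denominator $\alpha_m\alpha_{m+1}$, and the coefficient of $q_{m-1}$ combines $-t(b_m-x)/\alpha_m$ with $t^2c_m/\alpha_{m-1}$ over the common denominator $\alpha_{m-1}\alpha_m$; both match the expressions in the statement exactly.

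Finally, I would verify the initial conditions. By construction $q_0(x;t)=1$, and the conventions $p_{-1}(x)=0$, $q_{-1}(x;t)=q_{-2}(x;t)=0$ are consistent: the telescoping identity at $m=0$ reads $\alpha_0 p_0(x)=q_0-q_{-1}=1$, in agreement with $p_0=1$, $\alpha_0=1$, and the recurrence itself first constrains $q_1$ in terms of $q_0$ at $m=0$ (with any $c_0$-term harmlessly multiplied by $q_{-1}=0$, and similarly for $q_{-2}=0$).

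The argument is essentially a bookkeeping exercise, so there is no substantive obstacle; the only place where one has to be careful is in combining the fractions so that the resulting coefficients take the precise form displayed in the lemma, rather than one of the many equivalent forms. I would carry out this simplification once and check it by verifying the two extreme cases $m=1$ (where $q_{-2}$ drops out) and a generic $m$ against the definition \eqref{eq:genpartialsumgenfunction} expanded to order $t^{m+1}$.
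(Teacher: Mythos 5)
Your proof is correct and follows essentially the same route as the paper: both arguments rest on the telescoping identity $t^m\al_m p_m(x)=q_m(x;t)-q_{m-1}(x;t)$ combined with the three-term recurrence \eqref{eq:gen3termrecrelgenOP}, the only cosmetic difference being that the paper introduces undetermined coefficients $A_m,B_m,C_m$ and solves for them, whereas you substitute directly and clear the powers of $t$. The resulting coefficient bookkeeping is identical, so nothing further is needed.
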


Note that the recurrence relation of Lemma \ref{lem:gen4termrecurforpartialsums} determines the polynomial $q_m$, since the coefficient of $q_{m+1}(x;t)$
is nonzero. 

\begin{proof} 
Write 
\begin{align*}
&\, 
A_m \bigl[ q_{m+1}(x;t)- q_{m}(x;t) \bigr] 
+ B_m \bigl[ q_{m}(x;t)- q_{m-1}(x;t) \bigr] 
+ C_m \bigl[ q_{m-1}(x;t)- q_{m-2}(x;t) \bigr] 
\\ &\qquad
= A_m t^{m+1} \al_{m+1} p_{m+1}(x) 
+ B_m t^{m} \al_{m} p_{m}(x) 
+ C_m t^{m-1} \al_{m-1} p_{m-1}(x);
\end{align*}
this vanishes if we take $A_m\al_{m+1} = a_m$,
$B_m \al_m = t(b_m-x)$, $C_m\al_{m-1} = t^2 c_m$ by 
\eqref{eq:gen3termrecrelgenOP}. 
Plugging these values for $A_m$, $B_m$, $C_m$ in
and rearranging the terms gives the result. 
\end{proof}

Write Lemma \ref{lem:gen4termrecurforpartialsums} as
\begin{align*}
&
\frac{a_m}{\al_{m+1}} q_{m+1}(x;t) 
+ \frac{t\al_{m+1}b_m-a_m\al_m}{\al_m\al_{m+1}}
q_m(x;t) \\ &\qquad
+ t \frac{t c_m\al_m-\al_{m-1}b_m}{\al_{m-1}\al_m} q_{m-1}(t,x) - \frac{t^2 c_m}{\al_{m-1}} q_{m-2}(x;t)= xt \frac{1}{\al_m} \bigl( q_m(x;t) - q_{m-1}(x;t)\bigr)
\end{align*}
and define the semi-infinite matrix $L = (L_{i,j})_{i,j\in \N}$ by 
\begin{equation}\label{eq:gendefL}
\begin{aligned}
L_{m,m+1} &= \frac{a_m}{\al_{m+1}} &\quad\text{for}\; m&\in \N,  \\
L_{m,m} &= \frac{t\al_{m+1}b_m-a_m\al_m}{\al_m\al_{m+1}} &\quad\text{for}\; m&\in \N, \\
L_{m,m-1} &= t \frac{t c_m\al_m-\al_{m-1}b_m}{\al_{m-1}\al_m} &\quad\text{for}\; m&\in \N_{\geq 1}, \\ 
L_{m,m-2} &= - \frac{t^2 c_m}{\al_{m-1}} &\quad\text{for}\; m&\in \N_{\geq 2}, 
\end{aligned}
\end{equation}
and $L_{i,j}=0$ otherwise. Similarly, define $M=(M_{i,j})_{i,j\in\N}$ by 
\begin{equation}\label{eq:gendefM}
\begin{aligned}
M_{m,m} &= \frac{1}{\al_m} &\quad\text{for}\; m&\in \N, \\
M_{m,m-1} &= -\frac{1}{\al_m} &\quad\text{for}\; m&\in \N_{\geq 1},
\end{aligned}
\end{equation}
and $M_{i,j}=0$ otherwise. Introduce the vector
\begin{equation*}
\boldsymbol{q}(x;t) = \begin{pmatrix} q_0(x;t) \\ q_1(x;t) \\ q_2(x;t) \\ \vdots \end{pmatrix};
\end{equation*}
then Lemma \ref{lem:gen4termrecurforpartialsums} is equivalent to the generalised eigenvalue equation
\begin{equation}\label{eq:genGFgeneigvaleq}
L(t)\, \boldsymbol{q}(x;t) = xt\, M\, \boldsymbol{q}(x;t).
\end{equation}
Observe that $L(t)$ depends on $t$, while $M$ is independent of $t$. 

Next we define the truncations $L_N(t)$, respectively $M_N$, as the 
$(N+1)\times (N+1)$-matrices, by keeping the first $N+1$ rows and columns of $L(t)$ and $M$. 

\begin{prop}\label{prop:gengenfunctionasdetofalmosttridiagmatrix}
The matrix $P_N(t) = t^{-1} L_N(t)M_N^{-1}$ is tridiagonal, except for its last row. Explicitly, 
\begin{gather*}
P_N(t)_{i,i-1} = t c_i, \qquad P_N(t)_{i,i} = b_i, \qquad 
P_N(t)_{i,i+1} = \frac{a_i}{t}, \\
P_N(t)_{N,j} =  -\frac{\al_j\, a_N}{t \al_{N+1}}, \quad 
P_N(t)_{N,N-1} = t c_N -\frac{\al_{N-1}\, a_N}{t \al_{N+1}}, \quad
P_N(t)_{N,N} = b_N -\frac{\al_{N}\, a_N}{t\al_{N+1}},
\end{gather*}
and then
\begin{align*}
q_{N+1}(x;t)
&= (-t)^{N+1} \al_{N+1} \left( \prod_{k=0}^N a_k\right) 
\, \det(P_N(t) -x\Id) \\
&= (-1)^{N+1} \left( \prod_{k=0}^{N} \frac{\al_{k+1}}{a_k} 
\right) \, \det(L_N(t)-xtM_N). 
\end{align*}
The geometric multiplicity of any eigenvalue $x$ of $P_N(t)$ is $1$.
\end{prop}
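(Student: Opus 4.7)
The proposition has three distinct parts (the structure of $P_N(t)$, the determinant formula for $q_{N+1}$, and the simplicity of the eigenspaces), and my plan is to treat them in order.

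First I would make $M_N^{-1}$ explicit: writing $M_N=\diag(1/\al_0,\dots,1/\al_N)(I-S)$ with $S$ the lower shift, one immediately reads off $(M_N^{-1})_{ij}=\al_j$ for $i\ge j$ and $0$ otherwise. Multiplying out $t\,P_N(t)=L_N(t)M_N^{-1}$ row by row then reduces to a single identity. For $i<N$ and $j\le i-2$, every row $i$ of $L_N$ contributes $\al_j$ times
\[
L_{i,i-2}+L_{i,i-1}+L_{i,i}+L_{i,i+1},
\]
and direct substitution of \eqref{eq:gendefL} shows that these four summands telescope to zero; the analogous partial sums over the subsets $\{i-1,i,i+1\}$, $\{i,i+1\}$, $\{i+1\}$ produce $t^2 c_i$, $tb_i$, $a_i$ respectively, and division by $t$ gives the claimed tridiagonal part. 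In row $N$ the crucial term $L_{N,N+1}$ is absent from the truncation, so the same telescoping fails to cancel: it leaves the residue $-a_N/\al_{N+1}$, which propagates into the tail $-\al_j a_N/(t\al_{N+1})$ for $j\le N-2$ and into the perturbed entries at columns $N-1$ and $N$, in exactly the pattern asserted.

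For the determinant formula I would first observe that the two stated versions are equivalent: from $P_N(t)-x\Id=t^{-1}(L_N(t)-xtM_N)M_N^{-1}$ one obtains $\det(P_N(t)-x\Id)=t^{-(N+1)}\det(M_N^{-1})\det(L_N(t)-xtM_N)$, and $\det M_N=\prod_m(1/\al_m)$ takes care of the scalar prefactors. The real content is the identity
\[
(L_N(t)-xtM_N)(q_0,\dots,q_N)^T=-\tfrac{a_N}{\al_{N+1}}q_{N+1}(x;t)\,\mathbf{e}_N,
\]
which simply says that the infinite eigenvalue equation \eqref{eq:genGFgeneigvaleq} truncated at row $N$ loses exactly the single contribution $L_{N,N+1}q_{N+1}$. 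Applying the universal adjugate relation $A\,\mathrm{adj}(A)=(\det A)\Id$ with $A=L_N(t)-xtM_N$ to this identity and reading off the zeroth component, together with $q_0=1$, gives
\[
\det(L_N(t)-xtM_N)=(-1)^{N+1}\tfrac{a_N}{\al_{N+1}}q_{N+1}(x;t)\,\det(A'),
\]
where $A'$ is the $N\times N$ minor of $A$ obtained by deleting row $N$ and column $0$. It remains to compute $\det(A')$. Because $A_{i,j}=0$ whenever $j>i+1$, the entries of $A'$ strictly above its new diagonal correspond to original columns exceeding $i+1$ and vanish; thus $A'$ is lower triangular with diagonal entries $A_{i,i+1}=a_i/\al_{i+1}$, and $\det(A')=\prod_{k=0}^{N-1}a_k/\al_{k+1}$. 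Combining these pieces delivers the boxed formula.

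Finally, the simplicity of eigenspaces is automatic from the nearly-tridiagonal structure: any eigenvector $(v_0,\dots,v_N)^T$ of $P_N(t)$ is forced by the first $N$ rows to satisfy the three-term recursion $(a_i/t)v_{i+1}=(x-b_i)v_i-tc_iv_{i-1}$, and since $a_i/t\ne0$ this determines $v_1,\dots,v_N$ uniquely from $v_0$. The only subtlety I anticipate is that the Cramer-type step uses $A$ at values of $x$ where it is singular (namely, the zeros of $q_{N+1}$). This is avoided by using the universal adjugate identity rather than inversion, so the computation is a polynomial identity in $x$ and $t$ and requires no genericity assumption.
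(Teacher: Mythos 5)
Your proof is correct, and while it follows the same skeleton as the paper for the structure of $P_N(t)$ (the paper dismisses $tP_N(t)M_N=L_N(t)$ as ``a direct verification''; your explicit formula $(M_N^{-1})_{ij}=\al_j$ for $i\ge j$ and the telescoping of partial row sums of $L_N$ is exactly what that verification amounts to) and for the geometric multiplicity (identical argument: the nonzero superdiagonal forces an eigenvector to be determined by its first entry), you take a genuinely different route for the determinant identity. The paper derives $(P_N(t)-x\Id)\boldsymbol{p}_N(x;t)=\frac{-a_N}{t\al_{N+1}}q_{N+1}(x;t)\,\mathbf{e}_N$, concludes that $q_{N+1}(\cdot;t)$ and $\det(P_N(t)-x\Id)$ are polynomials of the same degree with the same zero set, hence proportional, and fixes the constant from the leading coefficients. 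You instead hit the truncated relation with $\mathrm{adj}(A)$ for $A=L_N(t)-xtM_N$ and evaluate the relevant cofactor as a triangular determinant $\prod_{k=0}^{N-1}a_k/\al_{k+1}$. Your version buys two things: it is an unconditional polynomial identity in $(x,t)$, avoiding the paper's implicit step that equal zero \emph{sets} plus equal degree forces proportionality (which strictly speaking needs matching multiplicities or a genericity argument); and it pins down the constant exactly. Carrying your computation to the end gives $q_{N+1}(x;t)=(-t)^{N+1}\al_{N+1}\bigl(\prod_{k=0}^{N}a_k\bigr)^{-1}\det(P_N(t)-x\Id)$, i.e.\ the product $\prod_{k=0}^N a_k$ in the first display of the Proposition should appear inverted (the $N=0$ case $q_1=1+t\al_1(x-b_0)/a_0$ confirms this, and it is the only normalisation consistent with the second display); this is a typo in the statement, not a flaw in your argument. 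The only cosmetic caveat: in your telescoping step the case split should be by column ranges within a fixed row $i<N$ (the full row sum vanishes, the tail sums over $\{k\ge i-1\}$, $\{k\ge i\}$, $\{k\ge i+1\}$ give $t^2c_i$, $tb_i$, $a_i$), which is what you evidently mean.
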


\begin{cor}\label{cor:prop:gengenfunctionasdetofalmosttridiagmatrix}
If $P_N(t)$ is semisimple, then the zeros of 
$x\mapsto q_{N+1}(x;t)$ are simple. If $x\mapsto q_{N+1}(x;t)$
has a zero of higher multiplicity at $x_0$ for the value $t=t_0$, then 
$P_N(t_0)$ has a higher-order Jordan block for the eigenvalue $x_0$. 
\end{cor}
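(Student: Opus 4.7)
The plan is to read the corollary as a direct consequence of Proposition~\ref{prop:gengenfunctionasdetofalmosttridiagmatrix}, which says two things: first, up to a nonzero scalar (depending on $t$ and $N$ but not on $x$), the polynomial $q_{N+1}(x;t)$ equals $\det(P_N(t)-x\Id)$, i.e.\ $q_{N+1}(\cdot\,;t)$ is the characteristic polynomial of $P_N(t)$; second, every eigenvalue of $P_N(t)$ has geometric multiplicity exactly $1$. With these facts in hand, the corollary is really a statement in linear algebra comparing algebraic and geometric multiplicities.

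First I would dispose of the first assertion. If $P_N(t)$ is semisimple, then by definition its algebraic and geometric multiplicities coincide for every eigenvalue. By the Proposition the geometric multiplicity is $1$, so the algebraic multiplicity of every eigenvalue of $P_N(t)$ is also $1$. Hence the characteristic polynomial $\det(P_N(t)-x\Id)$ has only simple zeros, and the same holds for $q_{N+1}(x;t)$ after multiplying by the nonzero scalar $(-t)^{N+1}\al_{N+1}\prod_{k=0}^N a_k$.

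Next I would argue the second assertion by contrapositive (really a direct translation of the first). Assume $x_0$ is a zero of $x\mapsto q_{N+1}(x;t_0)$ of multiplicity $r\ge2$. By the factorisation in Proposition~\ref{prop:gengenfunctionasdetofalmosttridiagmatrix}, $x_0$ is an eigenvalue of $P_N(t_0)$ of algebraic multiplicity $r$. The Proposition also guarantees that the eigenspace $\Ker(P_N(t_0)-x_0\Id)$ is one-dimensional. Since the geometric multiplicity equals the number of Jordan blocks associated with $x_0$, there is exactly one Jordan block for $x_0$; its size must equal the algebraic multiplicity $r\ge2$. This is precisely a higher-order Jordan block, as claimed.

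No step looks like a serious obstacle: the substance is entirely carried by the Proposition, and the corollary just packages the interplay between algebraic multiplicity (zeros of the characteristic polynomial) and geometric multiplicity (dimension of the eigenspace). The only point to be careful about is to record explicitly that the scalar in front of $\det(P_N(t)-x\Id)$ in the Proposition is nonzero for all admissible $t\ne0$, so that multiplicities of zeros of $q_{N+1}(\,\cdot\,;t)$ and of the characteristic polynomial agree; this follows from the standing assumptions $a_n\ne0$ and $\al_n\ne0$.
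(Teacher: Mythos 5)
Your argument is correct and is exactly the intended deduction: the paper leaves this corollary without a written proof precisely because it follows from Proposition~\ref{prop:gengenfunctionasdetofalmosttridiagmatrix} by the standard comparison of algebraic multiplicity (zeros of the characteristic polynomial, which equals $q_{N+1}$ up to the nonzero scalar) with the geometric multiplicity~$1$. Your careful note that the scalar is nonzero because $t\ne0$, $\al_{N+1}\ne0$ and $a_k\ne0$ is the right point to record.
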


\begin{proof}[Proof of Proposition~\ref{prop:gengenfunctionasdetofalmosttridiagmatrix}]
$M_N$ is invertible as a lower triangular matrix with
$\det(M_N)= \prod_{i=0}^N \al_i^{-1}$, and 
$tP_N(t) M_N = L_N(t)$ follows by a direct verification. 
Now Lemma \ref{lem:gen4termrecurforpartialsums} gives 
\begin{equation}\label{eq:genGFgeneigvalequptoN}
L_N(t) \, \boldsymbol{q}_N(x;t) + 
\frac{a_N}{\al_{N+1}}
\begin{pmatrix} 0 \\ \vdots \\ 0 \\ q_{N+1}(x;t) \end{pmatrix} = xt \, M_N\, \boldsymbol{q}_N(x;t), 
\qquad 
\boldsymbol{q}_N(x;t) = \begin{pmatrix} q_0(x;t)\\ \vdots  \\ q_{N-1}(x;t) \\ q_N(x;t) \end{pmatrix}.
\end{equation}
It follows that $x_0\in \C$ satisfies $q_{N+1}(x_0;t)=0$ if and only if 
$\det(L_N(t)-x_0t\, M_N)=0$, and the latter is equivalent to
if $\det(P_N(t) -x_0\Id)=0$. Since the matrix $P_N(t)$ is tridiagonal with non-zero upper diagonal apart from the last row, it follows that any eigenvector for the eigenvalue $x_0$ is completely determined by its first entry. So the geometric multiplicity is at most one.

Observe next that \eqref{eq:genGFgeneigvalequptoN} gives 
\[
\boldsymbol{p}_N(x;t) = M_N \boldsymbol{q}_N(x;t) = \begin{pmatrix} t^0p_0(x) \\ \vdots \\ t^Np_N(x) \end{pmatrix} 
\quad \Longrightarrow 
\quad (P_N(t) -x\Id) \boldsymbol{p}_N(x;t) = 
\frac{- a_N}{t \al_{N+1}}
\begin{pmatrix} 0 \\ \vdots \\ 0 \\ q_{N+1}(x;t) \end{pmatrix},
\]
so the zeros of $x\mapsto q_{N+1}(x;t)$ are the eigenvalues of $P_N(t)$. Then  
$x\mapsto \det(P_N(t) -x\Id)$ and $x\mapsto q_{N+1}(x;t)$ are polynomials of the same degree with the same zeros, so they differ by a constant. Considering the leading coefficient gives the result. 
\end{proof}

\begin{remark}\label{rmk:prop:gengenfunctionasdetofalmosttridiagmatrix}
Recalling that $L_N(t)-xtM_N$ has four non-trivial diagonals, we can,  by developing along the last row, obtain a four-term recursion in $N$ for $\det(L_N(t)-xtM_N)$ following 
 \cite[Thm.~1]{CahiDNN}. This four-term recursion can be matched
 with the four-term recursion of Lemma \ref{lem:gen4termrecurforpartialsums} after rescaling. Then a check for the initial values gives another proof for the first expression of 
 $q_{N+1}(x;t)$ in Proposition \ref{prop:gengenfunctionasdetofalmosttridiagmatrix}.
\end{remark}

\begin{cor}\label{cor:prop:genpartsumisdetofHessenberg} 
The polynomials $p_n$ can be obtained from $\det(L_N(t)-xtM_N(t))$ via  
\begin{align*}
p_n(x) &= \frac{(-1)^{N+1}}{\al_n\, n!}  \left( \prod_{k=0}^{N} \frac{\al_{k+1}}{a_k} 
\right) \, 
\frac{d^n}{dt^n }\Big\vert_{t=0} \det(L_N(t)-xtM_N) \quad\text{for}\; N> n \\
&= \frac{(-1)^{n}}{\al_n\, n!}  \left( \prod_{k=0}^{n-1} \frac{\al_{k+1}}{a_k} 
\right) \, 
\frac{d^n}{dt^n } \det(L_{n-1}(t)-xtM_{n-1}).
\end{align*}
\end{cor}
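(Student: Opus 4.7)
The plan is to extract $p_n(x)$ from $q_{N+1}(x;t)$ by Taylor expansion in $t$, and then replace $q_{N+1}(x;t)$ by the determinantal expression provided by Proposition~\ref{prop:gengenfunctionasdetofalmosttridiagmatrix}. By definition~\eqref{eq:genpartialsumgenfunction}, $q_{N+1}(x;t) = \sum_{n=0}^{N+1} t^n\,\al_n\,p_n(x)$ is a polynomial in $t$ of degree $N+1$, whose Taylor coefficients in $t$ at the origin recover exactly $\al_n\,p_n(x)$ (up to the factor $n!$) for every $0\le n\le N+1$.

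First I would record the tautological identity
\[
\frac{1}{\al_n\,n!}\,\frac{d^n}{dt^n}\bigg|_{t=0} q_{N+1}(x;t) = p_n(x) \quad\text{for any}\; N\geq n,
\]
which is immediate from the definition of $q_{N+1}(x;t)$ since all terms $t^k \al_k p_k(x)$ with $k\ne n$ contribute zero after $n$ differentiations followed by evaluation at $t=0$. Substituting the first determinantal expression for $q_{N+1}(x;t)$ from Proposition~\ref{prop:gengenfunctionasdetofalmosttridiagmatrix} into this identity yields the first formula (valid for $N>n$, and in fact also for $N=n$).

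For the second formula I would take the borderline case $N=n-1$, i.e.\ $q_n(x;t) = \sum_{k=0}^{n} t^k \al_k p_k(x)$. Since this is a polynomial in $t$ of degree exactly $n$, the top coefficient $\al_n\,p_n(x)$ is isolated by taking $n$ derivatives in~$t$ without the need to evaluate at $t=0$: all lower-order terms are killed by differentiation, so $(d^n/dt^n)\,q_n(x;t) = n!\,\al_n\,p_n(x)$ as a polynomial identity. Plugging in the determinantal expression for $q_n(x;t)=q_{N+1}(x;t)$ with $N=n-1$ from Proposition~\ref{prop:gengenfunctionasdetofalmosttridiagmatrix} produces the second formula.

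There is essentially no obstacle here: the statement is a direct corollary of Proposition~\ref{prop:gengenfunctionasdetofalmosttridiagmatrix} combined with the trivial Taylor extraction of a single coefficient of a polynomial. The only point worth a sentence of care is the consistency of the prefactors\,---\,namely, verifying that restricting the product $\prod_{k=0}^{N}\al_{k+1}/a_k$ to $\prod_{k=0}^{n-1}\al_{k+1}/a_k$ when passing from the first to the second formula is compatible with the two differently-sized determinants, which follows because both sides must equal the same polynomial $p_n(x)$ by construction.
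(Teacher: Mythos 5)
Your proposal is correct and is exactly the argument the paper intends (the corollary is stated without proof precisely because it follows by extracting the coefficient of $t^n$ from $q_{N+1}(x;t)=\sum_{k=0}^{N+1}t^k\al_k p_k(x)$ and substituting the determinantal formula of Proposition~\ref{prop:gengenfunctionasdetofalmosttridiagmatrix}, with $N=n-1$ for the second identity). Your observation that evaluation at $t=0$ is unnecessary in the borderline case, since the determinant is then a polynomial of degree exactly $n$ in $t$, is the right justification for the second formula.
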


Next we look into possible zeros of higher multiplicity as in 
Corollary~\ref{cor:prop:gengenfunctionasdetofalmosttridiagmatrix}. For this we write $P_N(t) = J_N(t) + R_N(t)$, where $J_N(t)$ is the truncation of the 
Jacobi matrix $J(t)$ with $J(t)_{i,i}=b_i$, $J(t)_{i,i+1}=a_i/t$, 
$J(t)_{i-1,i}=c_it$ and $R_N(t)_{i,j}= \de_{i,N} \frac{-\al_j\, a_N}{t\, \al_{N+1}}$ is the matrix with nonzero entries only in the last row. 
Corollary~\ref{cor:prop:gengenfunctionasdetofalmosttridiagmatrix} states that we need to find nontrivial vectors $w=w(x;t)$ such that 
$(P_N(t_0)-x_0)w(x;t) = \boldsymbol{p}_N(x_0;t_0)$ with $q_{N+1}(x_0;t_0)=0$ in order to have a zero of higher multiplicity for $x\mapsto q_{N+1}(x;t_0)$ 
at $x_0$. 

\begin{lemma}\label{lem:partialinverseJacobi}
Define the strictly lower triangular matrix $B_N(x;t)$ by 
\[
B_N(x;t)_{i,j} = \begin{cases} 0 & \text{for}\; i\leq j, \\
\dfrac{t^{i-j}}{a_j} p_{i-j-1}^{(j+1)}(x) & \text{for}\; i\geq j+1, 
\end{cases} \qquad 0\leq i,j \leq N, 
\]
in terms of the associated polynomials, see eq.~\eqref{eq:gen3termrecrelgenassociatedOP}. 
Then $\bigl( (J_N(t)-x)B_N(x;t)\bigr)_{i,j} =\de_{i,j}$ for $i<N$ and 
$\bigl( (J_N(t)-x)B_N(x;t)\bigr)_{N,N} =0$ and 
\[
\bigl( (J_N(t)-x)B_N(x;t)\bigr)_{N,j} = -\frac{t^{N-j}a_N}{a_j} p_{N-j}^{(j+1)} \qquad\text{for}\; 0\leq j <N.
\]
\end{lemma}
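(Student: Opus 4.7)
The approach is direct matrix multiplication, computing $\bigl((J_N(t)-x\Id)B_N(x;t)\bigr)_{i,j}$ row by row and exploiting the near-tridiagonal structure of $J_N(t)-x\Id$ (where the would-be superdiagonal $(N,N+1)$-entry is absent because of truncation) together with the strict lower-triangularity of $B_N(x;t)$. The conceptual observation I would lead with is that the $j$-th column of $B_N(x;t)$, read from row $j+1$ downward, equals $\bigl(t^n\,p_n^{(j+1)}(x)\bigr)_{n\ge0}/a_j$, i.e.\ a shifted copy of the eigenvalue-$x$ solution to the associated-polynomial recurrence \eqref{eq:gen3termrecrelgenassociatedOP}. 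Since this recurrence is precisely the action of the tridiagonal part of $J_N(t)-x\Id$ on rows labelled $\ge j+1$, the product must vanish in all interior rows, and any surviving contribution is a pure boundary effect at the top row $i=j$ (start of the shifted orbit) or the bottom row $i=N$ (truncation).

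The verification is then bookkeeping. For $i<j$, all three entries $B_N(x;t)_{i-1,j}$, $B_N(x;t)_{i,j}$, $B_N(x;t)_{i+1,j}$ are zero by strict lower-triangularity; similarly for $i=j=N$, where only two such entries appear and both vanish. For $i=j<N$ the only non-zero contribution is $(a_j/t)\cdot B_N(x;t)_{j+1,j}=(a_j/t)\cdot(t/a_j)\,p_0^{(j+1)}(x)=1$, giving the Kronecker delta. For $j<i<N$ I factor $t^{i-j}/a_j$ out of the three non-zero contributions to obtain the bracket
\begin{equation*}
a_i\,p_{i-j}^{(j+1)}(x)+(b_i-x)\,p_{i-j-1}^{(j+1)}(x)+c_i\,p_{i-j-2}^{(j+1)}(x),
\end{equation*}
which vanishes by \eqref{eq:gen3termrecrelgenassociatedOP} applied with $k=j+1$ and $n=i-j-1$ (the case $i=j+1$ is absorbed uniformly via the initial value $p_{-1}^{(j+1)}=0$).

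Finally, for $i=N$ and $j<N$ the same bracket would appear, except that the missing superdiagonal $(N,N+1)$-entry of $J_N(t)-x\Id$ removes precisely the $a_N\,p_{N-j}^{(j+1)}(x)$ summand; the remaining two terms therefore sum to $-a_N\,p_{N-j}^{(j+1)}(x)$, producing the asserted residual $-t^{N-j}a_N/a_j\cdot p_{N-j}^{(j+1)}(x)$. The only real \emph{difficulty} is the initial conceptual step of recognising the columns of $B_N$ as shifted associated-polynomial solutions; once this is in place, no individual step is more than a one-line application of a three-term recurrence.
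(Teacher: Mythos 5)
Your proposal is correct and follows essentially the same route as the paper's proof: a direct row-by-row multiplication exploiting the strict lower-triangularity of $B_N(x;t)$, the identification of its columns as shifted solutions of the associated-polynomial recurrence \eqref{eq:gen3termrecrelgenassociatedOP}, and the observation that the last-row defect comes from the missing $(N,N+1)$-entry of the truncated Jacobi matrix. Your treatment of the boundary cases (the diagonal entry giving $p_0^{(j+1)}=1$, the case $i=j+1$ absorbed via $p_{-1}^{(j+1)}=0$, and the explicit residual in row $N$) is if anything slightly more detailed than the paper's, which leaves the last row to the reader.
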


\begin{proof} This is a straightforward calculation. Since 
$J_N(t)$ is tridiagonal and $B_N(x;t)$ is strictly lower triangular, the product is lower triangular. For $i=0$ the result follows easily.
For $i>0$ we can restrict to $j\leq i$, so that 
\begin{align*}
&\, \bigl( (J_N(t)-x)B_N(x;t)\bigr)_{i,j} = 
c_it B_N(x;t)_{i-1,j} + (b_i-x) B_N(x;t)_{i,j} + t^{-1} a_i B_N(x;t)_{i+1,j}
\end{align*}
which is $1$ in case $j=i$. In case $j<i$, it equals 
\begin{align*}
&
c_it \frac{t^{i-1-j}}{a_j} p_{i-j-2}^{(j+1)}(x) + (b_i-x) 
\frac{t^{i-j}}{a_j} p_{i-j-1}^{(j+1)}(x) + t^{-1} a_i 
\frac{t^{i+1-j}}{a_j} p_{i-j}^{(j+1)}(x) \\ &\qquad
= \frac{t^{i-j}}{a_j} 
\bigl( c_i  p_{i-j-2}^{(j+1)}(x) + (b_i-x) 
p_{i-j-1}^{(j+1)}(x) +  a_i p_{i-j}^{(j+1)}(x) 
\bigr) = 0
\end{align*}
by \eqref{eq:gen3termrecrelgenassociatedOP}. It remains to calculate the final row. The $(N,N)$-entry is zero, since $B_N(x;t)$ is strictly lower triangular and the expressions for other entries follow from \eqref{eq:gen3termrecrelgenassociatedOP}.
\end{proof}

Lemma \ref{lem:partialinverseJacobi} states that $w(x;t) = B_N(x;t)\boldsymbol{p}_N(x;t)$ is a potential generalised eigenvector. Explicitly, by applying 
Lemma~\ref{lem:partialinverseJacobi} to $\boldsymbol{p}_N(x;t)$ we obtain the following result.

\begin{cor}\label{cor:lem:partialinverseJacobi}
In the notation
\[
r_{N+1}(x;t) = -t^N \sum_{j=0}^N \frac{a_N}{a_j} p_{N-j}^{(j+1)}(x)\, p_j(x)
\]
we have 
\[
(J_N(t)-x)B_N(x;t) \boldsymbol{p}_N(x;t) = \boldsymbol{p}_N(x;t) + r_{N+1}(x;t) 
\begin{pmatrix} 0 \\ \vdots \\ 0 \\ 1 \end{pmatrix}.
\]
\end{cor}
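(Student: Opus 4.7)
The corollary is essentially a packaging of Lemma~\ref{lem:partialinverseJacobi} after multiplying the matrix $(J_N(t)-x)B_N(x;t)$ on the right by the vector $\boldsymbol{p}_N(x;t)$, whose $j$-th entry is $t^jp_j(x)$. My plan is therefore purely computational: evaluate this matrix-vector product coordinate-by-coordinate using the entries already supplied by the lemma.

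First I would handle the coordinates $0\le i<N$. For these, Lemma~\ref{lem:partialinverseJacobi} gives $((J_N(t)-x)B_N(x;t))_{i,j}=\delta_{i,j}$, so the $i$-th coordinate of the product equals exactly $t^ip_i(x)$ and hence matches the corresponding entry of $\boldsymbol{p}_N(x;t)$. This already verifies the claimed identity in every coordinate except the last, which is where the correction term $r_{N+1}(x;t)$ must appear.

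Next I would compute the $N$-th coordinate. Plugging $((J_N(t)-x)B_N(x;t))_{N,N}=0$ together with the explicit formula for $((J_N(t)-x)B_N(x;t))_{N,j}$ with $j<N$ from the lemma into the inner product with $\boldsymbol{p}_N(x;t)$, the last entry of the product simplifies to
\[
-t^N\sum_{j=0}^{N-1}\frac{a_N}{a_j}\,p_{N-j}^{(j+1)}(x)\,p_j(x).
\]
Setting this equal to the required value $t^Np_N(x)+r_{N+1}(x;t)$ pins down $r_{N+1}(x;t)$ up to the missing summand $-t^Np_N(x)$, which I would absorb into the sum as its $j=N$ contribution using the normalisations $p_0^{(N+1)}(x)=1$ (from the initial condition in \eqref{eq:gen3termrecrelgenassociatedOP}) and the trivial equality $a_N/a_N=1$.

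There is no real obstacle here; the argument is bookkeeping on top of Lemma~\ref{lem:partialinverseJacobi}. The only subtlety worth flagging is that extending the summation range to $j=N$ is exactly what recasts the last-coordinate calculation in the compact closed form stated in the corollary.
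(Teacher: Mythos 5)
Your proposal is correct and is exactly the paper's route: the paper derives the corollary by applying Lemma~\ref{lem:partialinverseJacobi} to the vector $\boldsymbol{p}_N(x;t)$, and your coordinate-by-coordinate bookkeeping (including absorbing the $-t^Np_N(x)$ term as the $j=N$ summand via $p_0^{(N+1)}(x)=1$) supplies precisely the routine verification the paper leaves implicit.
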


Note that Lemma \ref{lem:partialinverseJacobi} and Corollary \ref{cor:lem:partialinverseJacobi} only make use of the three-term recurrence relation, so that we can take $t=1$ without loss of generality. 
In case $a_{n-1}c_n>0$ for all $n\geq 1$ we have that $J(1)$ is semisimple, hence the zeros of $p_{N+1}$, which form the spectrum of $J(1)$, do not coincide with zeros of $r_{N+1}(x;1)$; in other words, 
$p_{N+1}(x)$ and $\sum_{j=0}^N a_j p_{N-j}^{(j+1)}(x)\, p_j(x)$ have no common zeros. 
Therefore, for orthogonal polynomials Lemma \ref{lem:partialinverseJacobi} does not add any information. In the case of partial sums of generating functions, we need to switch from $J_N(t)$ to $P_N(t) = J_N(t) + R_N(t)$. For this we just need to modify with $R_N(t) B_N(x;t) \boldsymbol{p}_N(x;t)$, which only adds to the last row. 
This results in the following statement, which characterises multiple zeros of the polynomial $q_N(x;t_0)$ when $t_0$~is fixed.

\begin{prop}\label{cor2:lem:partialinverseJacobi} 
Let 
\[
s_{N+1}(x;t) = \frac{-a_N}{\al_{N+1}} 
\sum_{k=1}^N \frac{\al_k}{a_k} t^{k-1} \sum_{j=0}^{k-1} p_{k-1-j}^{(j+1)}(x)\, p_j(x)
\]
and $r_{N+1}(x;t)$ as in Corollary \ref{cor:lem:partialinverseJacobi}.
Assume $q_{N+1}(x_0;t_0)=0$.  Then $x_0$ is a zero of higher multiplicity
of $x\mapsto q_{N+1}(x;t_0)$ if and only if 
$r_{N+1}(x_0;t_0) + s_{N+1}(x_0;t_0) = 0$.
\end{prop}

Note that $r_{N+1}(x;t) + s_{N+1}(x;t)$ can be written as a kind of generating function for the polynomials $r_n(x;t)$ of lower degree; namely,
\begin{align*}
r_{N+1}(x;t) + s_{N+1}(x;t)
&= - a_N 
\sum_{i=1}^{N+1} t^{i-1} \frac{\al_i}{\al_{N+1}}
\sum_{j=0}^{i-1} p_{i-j-1}^{(j+1)}(x) \frac{p_j(x)}{a_j}
\\
&= \frac{a_N}{\al_{N+1}}
\sum_{i=1}^{N+1} \frac{\al_i}{a_{i-1}}\, r_i(x;t).
\end{align*}

One observation, which follows from a simple manipulation with sums, is 
that we have a generating function for the partial sums in case  the 
corresponding generating function of the polynomials is known.

\begin{lemma}\label{lem:genfun}
If $f(x;y)=\sum_{n=0}^\infty\al_np_n(x)y^n$, then the partial sums 
have the generating function 
\[
\sum_{m=0}^\infty q_m(x;t)y^m
=\frac{f(x;ty)}{1-y}.
\]
\end{lemma}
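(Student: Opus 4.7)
The plan is to treat the claimed identity as a formal power series identity in $y$ (convergence, when it holds, follows from standard domain considerations for the given generating function $f$, but the combinatorics is the only real content). I would begin by substituting the definition \eqref{eq:genpartialsumgenfunction} of $q_m(x;t)$ into the left-hand side, obtaining the double sum
\[
\sum_{m=0}^\infty q_m(x;t)\,y^m = \sum_{m=0}^\infty y^m \sum_{n=0}^m t^n\al_n p_n(x).
\]

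Next I would interchange the order of summation. For each fixed $n\ge 0$, the index $m$ runs over $n,n+1,n+2,\dots$, giving
\[
\sum_{n=0}^\infty t^n\al_n p_n(x)\sum_{m=n}^\infty y^m
= \sum_{n=0}^\infty t^n\al_n p_n(x)\,\frac{y^n}{1-y}
= \frac{1}{1-y}\sum_{n=0}^\infty \al_n p_n(x)\,(ty)^n,
\]
where the inner geometric series is summed in the formal sense (or in any region $|y|<1$ where $f(x;ty)$ converges). Recognising the remaining sum as $f(x;ty)$ yields the claim.

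Alternatively, and perhaps more transparently, one may present this as a Cauchy product: write $\frac{1}{1-y}=\sum_{m=0}^\infty y^m$ and $f(x;ty) = \sum_{n=0}^\infty \al_n p_n(x) t^n y^n$, so their product in the ring of formal power series in $y$ is
\[
\sum_{m=0}^\infty y^m \sum_{n=0}^m \al_n p_n(x) t^n = \sum_{m=0}^\infty q_m(x;t)\,y^m,
\]
which is exactly the left-hand side.

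There is no real obstacle: the lemma is a one-line manipulation. The only thing worth flagging is that both derivations are purely formal and, if one wants a genuine analytic identity, one should restrict $y$ to a disk on which $f(x;ty)$ converges absolutely and $|y|<1$, which then justifies the interchange of summations by absolute convergence.
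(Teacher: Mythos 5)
Your proof is correct and follows the same route as the paper: interchange the order of summation, sum the geometric series $\sum_{m\ge n} y^m = y^n/(1-y)$, and identify the remaining sum as $f(x;ty)$, with the same remark about absolute convergence for $|ty|$ inside the radius of convergence of $f$. The Cauchy-product reformulation is a harmless repackaging of the same computation.
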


\begin{proof} The left-hand side equals 
\[
\sum_{m=0}^\infty y^m\sum_{n=0}^mt^n\al_np_n(x)
= \sum_{n=0}^\infty t^n\al_np_n(x) \sum_{m=n}^\infty y^m 
=\frac1{1-y}\sum_{n=0}^\infty\al_np_n(x)(ty)^n
=\frac{f(x;ty)}{1-y}. 
\]
If $\sum_{n=0}^\infty\al_np_n(x)y^n$ has radius of convergence
$|y|<R$, the generating function for the partial sums converges absolutely for $|yt|<R$. 
\end{proof}


\section{Zeros of partial sums}\label{sec:genzerospartialsums}


Recall the assumptions at the beginning of 
Section~\ref{sec:generalsetup}. From now on we further assume that polynomials satisfy the conditions of Favard's theorem (see, e.g., \cite{AndrAR}, \cite{Isma}), namely, that $a_{n-1}c_n>0$ for all $n\geq 1$. In particular, this implies that each polynomial $p_n$ from the orthogonal family has $n$ simple real zeros, denoted 
$x_1^{n}<x_2^{n}< \cdots <x_n^n$. Moreover, the zeros of 
$p_n$ and $p_{n+1}$ strictly interlace and the zeros are contained in the 
convex hull of the support $\supp(\mu)\subset \R$ of the orthogonality measure $\mu$ for the polynomials. 

By the realness assumptions, the zeros of $q_m(x;t)$ are either real or appear in complex conjugate pairs. 
For $N=0$ we have $q_0(x;t)=1$, which has no zeros. For $N=1$ we get 
$q_1(x;t) = 1+ \al_1 t \frac{x-b_0}{a_0}$ which has one real 
zero $b_0-\frac{a_0}{\al_1 t}$. For $N=2$ we get 
\begin{equation}
q_2(x;t) =1 + \al_1 t \frac{x-b_0}{a_0} + \al_2t^2 
\Bigl( \frac{(x-b_0)(x-b_1)}{a_0a_1} -\frac{c_1}{a_1}\Bigr)
\end{equation}
and viewing this as a quadratic polynomial in $x$, its discriminant is a polynomial in $t$ of degree $4$ with leading coefficient 
\[
\frac{\al_2^2}{a_0^2a_1^2}(b_0-b_1)^2 + 4 \frac{c_1 \, \al_1^2}{a_0 a_1^2} >0.
\]
Therefore, for $t$ sufficiently large there are two real simple zeros for $x\mapsto q_2(x;t)$. This realness remains true for general $N$.

\begin{prop}\label{prop:genrealzerosfortbig}
For $t\gg 0$ the partial sum $x\mapsto q_N(x;t)$ has real simple zeros. Moreover, for $t\gg 0$ the real simple zeros of $x\mapsto q_N(x;t)$ and the real simple zeros of 
$x\mapsto q_{N+1}(x;t)$ interlace. 
\end{prop}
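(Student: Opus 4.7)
The plan is to view the partial sum, after dividing out its leading term in $t$, as a small perturbation of the top-degree orthogonal polynomial and then invoke Rouché's theorem. Set
\[
\tilde q_N(x;t) = (t^N\al_N)^{-1}q_N(x;t) = p_N(x) + \sum_{n=0}^{N-1}t^{n-N}\frac{\al_n}{\al_N}p_n(x),
\]
so that $\tilde q_N(\,\cdot\,;t)\to p_N$ uniformly on compact subsets of $\C$ as $t\to\infty$. By the Favard assumption, $p_N$ has $N$ simple real zeros $x_1^N<\cdots<x_N^N$. I would choose pairwise disjoint closed discs $D_j\subset\C$ centred at the $x_j^N$ with a common radius $r>0$ small enough that $|p_N|\ge\de>0$ on every $\partial D_j$. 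For $t$ large, $|\tilde q_N-p_N|<\de$ on each $\partial D_j$, so by Rouché each $D_j$ contains exactly one zero of $\tilde q_N$. Since $\deg\tilde q_N=N$, these account for all zeros of $q_N(\,\cdot\,;t)$.

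Next I would deduce realness from the reality of the coefficients: if $z\in D_j$ is a zero of $\tilde q_N$, then so is $\bar z$; as $D_j$ is a disc centred on $\R$ it is stable under complex conjugation, so $\bar z\in D_j$, and the single-zero count forces $\bar z=z$. Hence each of the $N$ zeros is real, and each is simple since Rouché counts with multiplicity.

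For the interlacing claim, I would apply the same construction to $p_{N+1}$ with zeros $x_1^{N+1}<\cdots<x_{N+1}^{N+1}$ and discs $D'_k$ centred at these points of radius $r'$. The classical strict interlacing
\[
x_1^{N+1}<x_1^N<x_2^{N+1}<x_2^N<\cdots<x_N^N<x_{N+1}^{N+1}
\]
is an open condition on the positions of the two sets of real points: by shrinking $r$ and $r'$ I can ensure that the intervals $(x_j^N-r,x_j^N+r)$ and $(x_k^{N+1}-r',x_k^{N+1}+r')$ remain linearly ordered in the same pattern. For all sufficiently large $t$, the real simple zeros of $q_N(x;t)$ lie in the first family of intervals and those of $q_{N+1}(x;t)$ in the second, so the two sets interlace.

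There is no serious obstacle: the argument reduces to the continuity of zeros under perturbation together with the fact that strict interlacing of real simple zeros is an open condition. The only step requiring a moment of care is the extraction of realness from the Rouché count, for which the symmetry of each disc under conjugation provides a clean closing argument.
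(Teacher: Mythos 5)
Your proof is correct and follows essentially the same strategy as the paper's: view $q_N(x;t)$ as a perturbation of its dominant term $t^N\al_N p_N(x)$, localize its zeros in disjoint, conjugation-stable discs around the simple real zeros of $p_N$ to force realness and simplicity, and then transfer the classical interlacing of the zeros of $p_N$ and $p_{N+1}$ by taking all radii smaller than half the minimal gap between the two zero sets. The only difference is the localization tool: you invoke Rouch\'e's theorem, whereas the paper uses an elementary product estimate (evaluating $q_N$ at each zero $x_i^N$ of $p_N$ and bounding $\prod_j|x_i^N-\be_j|$), following a MathOverflow argument modelled on Krasner's lemma.
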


The idea for the proof below comes from the entry in Mathoverflow \cite{MOuser}
due to an unknown user, who motivates it as a real-valued counterpart of Krasner's lemma from $p$-adic analysis. 

\begin{proof} Write
\begin{equation*}
q_N(x;t) = \al_N t^N p_N(x) + q_{N-1}(x;t), 
\end{equation*}
viewing $q_N(x;t)$ as a perturbation of $p_N(x)$, which has real and simple zeros. Put 
\begin{equation*}
m_N = \min_{1\leq i\ne j \leq N} |x^N_i -x^N_j | = 
\min_{1\leq i< N} |x^N_{i+1} -x^N_{i} | >0 
\end{equation*}
to be the minimum distance between the zeros of $p_N$. Notice that $m_N$ is independent of $t$. 
Write 
\begin{equation}
q_N(x;t) = t^N \,\al_N\, \lc(p_N) \, \prod_{j=1}^N (x-\be_j^N)
\end{equation}
where $\{\be_1^N,\dots, \be_N^N\}$ are the zeros of $x\mapsto q_N(x;t)$ (with multiplicity) and the leading coefficient of $q_N(x;t)$ is $t^N\al_N$ times the leading coefficient of $p_N$. Now $q_{N-1}(x;t)$ is a polynomial of 
degree $N-1$ in $t$, and we pick $t\gg 0$ such that 
\begin{equation}\label{eq:prop:genrealzerosfortbig1}
|q_{N-1}(x_i^N;t)| < |t|^N\, |\al_N|\, |\lc(p_N)| \Bigl( \frac{m_N}{2} \Bigr)^N \qquad\text{for all}\; i\in \{1,\dots, N\}.
\end{equation}
With this choice of $t$ we evaluate $q_N$ at a zero $x^N_i$ of $p_N$: 
\begin{align*}
t^N \,\al_N\, \lc(p_N) \, \prod_{j=1}^N (x^N_i-\be_j^N) = 
q_N(x^N_i;t) =  q_{N-1}(x^N_i;t),
\end{align*}
hence $\prod_{j=1}^N |x^N_i-\be_j^N| < (\frac12 m_N)^N$ by 
taking absolute values and using \eqref{eq:prop:genrealzerosfortbig1}.
This implies that there is an index $j\in \{1,\dots,N\}$ so that 
$|\be_j^N-x^N_i| < \frac12 m_N$. The discs
$B_i = \{z\in \C \mid |x^N_i-z| <\frac12 m_N\}$ are disjoint 
by construction 
and, since for any $i$ there exists a $j$ such that the zero $\be_j^N\in B_i$, we get a bijection $x^N_i \mapsto \be_j^N$ between the zeros of
$p_N$ and of $x\mapsto q_N(x;t)$. This forces $\be_j^N\in \R$ and 
$\be_j^N$ to be a simple zero of $q_N$.

Next we consider $x\mapsto q_{N+1}(x;t)$ and its zeros by $\be_j^{N+1}$, $1\leq j \leq N+1$. By the interlacing properties of the orthogonal polynomials (see, e.g., \cite[Ch.~1, \S~5]{Chih}, \cite{VanA}), we have 
\begin{equation*}
x_1^{N+1}<x_1^N < x_2^{N+1}<x_2^N < \cdots <x_{N}^{N+1} < x_N^N < x_{N+1}^{N+1}. 
\end{equation*}
We put $M_N = \min_{i,j} |x^N_i - x^{N+1}_j|>0$, so that in particular $M_N<m_N$ and $M_N<m_{N+1}$. Now we take $t\gg 0$ implying 
\begin{align*}
|q_{N-1}(x_i^N;t)| & < |t|^N\, |\al_N|\, |\lc(p_N)| \Bigl( \frac{M_N}{2} \Bigr)^N, \\
|q_{N}(x_j^{N+1};t)| & < |t|^{N+1}\, |\al_{N+1}|\, |\lc(p_{N+1})| \Bigl( \frac{M_N}{2} \Bigr)^{N+1}
\end{align*}
for all $i$ and all $j$, cf.~\eqref{eq:prop:genrealzerosfortbig1}. Using the result of the first part, we can order $\be_1^N<\be_2^N< \cdots <\be_N^N$ and 
$\be_1^{N+1}<\be_2^{N+1}<\cdots <\be_{N+1}^{N+1}$ so that 
\begin{align*}
\be_i^N\in (x_i^N-\frac12 M_N, x_i^N+\frac12 M_N), 
\qquad 
\be_j^{N+1}\in (x_j^{N+1}-\frac12 M_N, x_j^{N+1}+\frac12 M_N) 
\end{align*}
for all $i$ and all $j$. By definition of $M_N$, this gives 
\begin{equation*}
\be_1^{N+1}<\be_1^N<\be_2^{N+1}<\be_2^N<\dots < \be_N^{N+1} < \be_N^N <\be_{N+1}^{N+1}.
\qedhere
\end{equation*}
\end{proof}

Note that we also obtain an immediate corollary of the 
proof of Proposition \ref{prop:genrealzerosfortbig} on the interlacing of the zeros of the orthogonal polynomials and the partial sums. 

\begin{cor}\label{cor:prop:genrealzerosfortbig}
For $t\gg 0$  the real simple zeros of $x\mapsto q_N(x;t)$ and the real simple zeros of $p_{N+1}$ interlace. 
For $t\gg 0$  the real simple zeros of $x\mapsto q_{N+1}(x;t)$ and the real simple zeros of $p_{N}$ interlace. 
\end{cor}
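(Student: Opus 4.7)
The plan is to observe that Corollary~\ref{cor:prop:genrealzerosfortbig} is essentially a byproduct of the second half of the proof of Proposition~\ref{prop:genrealzerosfortbig}, with no new analytic work required. That argument already establishes, for $t$ sufficiently large, that the zeros $\be_i$ of $q_N(x;t)$ lie in $(x_i^N - M_N/2,\, x_i^N + M_N/2)$ and the zeros $\ga_j$ of $q_{N+1}(x;t)$ lie in $(x_j^{N+1} - M_N/2,\, x_j^{N+1} + M_N/2)$, where $M_N = \min_{i,j} |x_i^N - x_j^{N+1}|$. I would simply reuse these two localisations, noting that the constant $M_N$ was engineered to make precisely this kind of conclusion free.

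For the first statement, the key point is that by the very definition of $M_N$, the open interval of radius $M_N/2$ around $x_i^N$ contains no zero $x_j^{N+1}$ of $p_{N+1}$; combined with the strict interlacing $x_i^{N+1} < x_i^N < x_{i+1}^{N+1}$ of consecutive orthogonal polynomials, this forces the inclusion
\[
(x_i^N - M_N/2,\, x_i^N + M_N/2) \subset (x_i^{N+1},\, x_{i+1}^{N+1}).
\]
Hence the localisation of $\be_i$ immediately yields $x_i^{N+1} < \be_i < x_{i+1}^{N+1}$ for every $i$, which is exactly the required interlacing between the zeros of $q_N(x;t)$ and those of $p_{N+1}$.

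The second statement follows symmetrically: by the same definition of $M_N$, the interval of radius $M_N/2$ around $x_j^{N+1}$ contains no zero of $p_N$, and the interlacing of consecutive orthogonal polynomials places it inside $(x_{j-1}^N,\, x_j^N)$ for $2 \le j \le N$, while $\ga_1$ (respectively $\ga_{N+1}$) automatically lies to the left of $x_1^N$ (respectively to the right of $x_N^N$) by the same localisation estimate. I do not expect a genuine obstacle: both of the required smallness conditions on $|q_{N-1}(x_i^N;t)|$ and $|q_N(x_j^{N+1};t)|$ have already been imposed in Proposition~\ref{prop:genrealzerosfortbig}, so the only bookkeeping left is to take the maximum of the two thresholds on $t$. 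The content of the corollary is thus the observation that once the zeros of $q_N$ and $q_{N+1}$ are trapped in disks of radius $M_N/2$ around the appropriate orthogonal-polynomial zeros, the interlacing with the \emph{other-indexed} orthogonal polynomial is automatic.
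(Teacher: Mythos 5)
Your proposal is correct and follows essentially the same route as the paper: both arguments simply reuse the localisation of the zeros $\be_i$ and $\ga_j$ in the intervals of radius $M_N/2$ established in the second part of the proof of Proposition~\ref{prop:genrealzerosfortbig}, together with the interlacing of the zeros of $p_N$ and $p_{N+1}$ and the definition of $M_N$. Your write-up is in fact slightly more explicit than the paper's about why the inclusion $(x_i^N-M_N/2,\,x_i^N+M_N/2)\subset(x_i^{N+1},\,x_{i+1}^{N+1})$ holds, but the content is identical.
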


\begin{proof} In the second part of the proof, we see that 
the interval $(x_i^N-\frac12 M_N, x_i^N+\frac12 M_N)$ containing
$\be_i^N$ is contained in the interval $(x_i^{N+1}, x_{i+1}^{N+1})$. Hence
\begin{equation*}
x_1^{N+1}<\be_1^N < x_2^{N+1}<\be_2^N < \cdots <x_{N}^{N+1} < \be_N^N < x_{N+1}^{N+1}. 
\end{equation*}
Similarly, the second part of the proof of Proposition~\ref{prop:genrealzerosfortbig} shows that 
\begin{equation*}
\be_1^{N+1}<x_1^N<\be_2^{N+1}<x_2^N <\cdots < \be_N^{N+1} < x_N^N <\be_{N+1}^{N+1}
\end{equation*}
implying the statement on  interlacing. 
\end{proof}

\begin{cor}\label{cor2:prop:genrealzerosfortbig}
For $t\gg 0$  the real simple zeros of $x\mapsto q_N(x;t)$ 
are contained in the convex hull of the orthogonality measure for the orthogonal polynomials $(p_n)_{n\in \N}$.
\end{cor}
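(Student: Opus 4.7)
The plan is to read this corollary as an immediate consequence of the interlacing statement already established in Corollary~\ref{cor:prop:genrealzerosfortbig}, combined with the basic property recalled at the start of Section~\ref{sec:genzerospartialsums} that the zeros of any orthogonal polynomial $p_{N+1}$ lie inside the convex hull of $\supp(\mu)$. The idea is that once we know the zeros of $q_N(x;t)$ strictly interlace with those of $p_{N+1}$, they are automatically sandwiched between the extreme zeros $x_1^{N+1}$ and $x_{N+1}^{N+1}$, which are themselves inside the convex hull.

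In detail, I would fix $N$ and take $t\gg0$ large enough for Corollary~\ref{cor:prop:genrealzerosfortbig} to apply (this also subsumes the realness hypothesis from Proposition~\ref{prop:genrealzerosfortbig}, so the zeros of $x\mapsto q_N(x;t)$ are already known to be real and simple). Ordering these zeros as $\be_1<\be_2<\dots<\be_N$, the first half of Corollary~\ref{cor:prop:genrealzerosfortbig} yields
\[
x_1^{N+1}<\be_1<x_2^{N+1}<\be_2<\cdots<x_N^{N+1}<\be_N<x_{N+1}^{N+1},
\]
so every $\be_i$ lies in the bounded interval $(x_1^{N+1},x_{N+1}^{N+1})$. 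Since $x_1^{N+1}$ and $x_{N+1}^{N+1}$ are zeros of the orthogonal polynomial $p_{N+1}$, both belong to the convex hull of $\supp(\mu)$; by convexity the whole interval $(x_1^{N+1},x_{N+1}^{N+1})$ is contained in it, and therefore so are all the $\be_i$.

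There is really no obstacle here: the argument is a one-line inheritance from interlacing together with a classical fact about orthogonal polynomials. The only very mild point worth noting is that the threshold ``$t\gg0$'' appearing in this corollary is exactly the one coming from the second part of the proof of Proposition~\ref{prop:genrealzerosfortbig} (with the constant $M_N$, not just $m_N$), since it is that sharper estimate that is needed to force $\be_i$ into the narrow interval $(x_i^{N+1},x_{i+1}^{N+1})$ rather than merely into a neighbourhood of $x_i^N$.
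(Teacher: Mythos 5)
Your argument is exactly the paper's: both deduce the corollary from the interlacing of the zeros of $q_N(x;t)$ with those of $p_{N+1}$ (Corollary~\ref{cor:prop:genrealzerosfortbig}) together with the classical fact that the zeros of $p_{N+1}$ lie in the convex hull of $\supp(\mu)$. The proposal is correct and simply spells out the sandwiching step that the paper leaves implicit.
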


\begin{proof} We know that this holds for the zeros of any of the polynomials $p_n$, in particular for $p_{N+1}$, hence 
Corollary \ref{cor:prop:genrealzerosfortbig} implies the statement. 
\end{proof}

Proposition \ref{prop:genrealzerosfortbig} deals with the properties  of the partial sums for $t\gg 0$. On the other hand, $q_N(x;t)$ tends to 
$1$, which has no zeros, as $t\to 0$. Fixing 
$N$, and taking $(t_r)_{r\in \N}$ a decreasing sequence of positive
numbers with $\lim_{r\to \infty} t_r=0$ we get polynomials
$f_r(x) = q_N(x;t_r)$. Then the series $(f_r)_{r\in\N}$ is a series of holomorphic functions that converges uniformly on compact sets to the function identically equal to $1$. Now take $R>0$, and let $K=\overline{B_R}$ be the closed ball of radius $R$. According to Hurwitz's Theorem, see e.g.\ \cite[Thm.~6.4.1]{Simo}, there exists
$M\in \N$ such that the zeros of $f_r$ are outside of $K$ 
for all $r\geq M$. So the zeros of the partial sums $x\mapsto q_N(x;t)$ tend to infinity as $t\to 0$. Experimentally, we observe that the zeros tend to infinity through the complex plane as $t\to 0$. 

\begin{conj}\label{conj:criticalvaluet} For each $N>0$, there exists a threshold $t_{\operatorname{crit}}=t_{\operatorname{crit}}(N)>0$ such that the polynomial
$x\mapsto q_N(x;t)$ has $N$ real simple zeros for $t>t_{\operatorname{crit}}$, while at least one pair of complex conjugate zeros for $0<t<t_{\operatorname{crit}}$. 
\end{conj}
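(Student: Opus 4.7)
The plan is to introduce $t_{\textrm{crit}}(N)$ as the largest value of $t>0$ at which $q_N(x;t)$ acquires a multiple zero, then track how the number of real zeros changes across the resulting finite stratification of $(0,\infty)$. Concretely, let
\[
D_N(t) = \operatorname{disc}_x q_N(x;t).
\]
Since $q_N(\cdot;t)$ has positive degree $N$ in $x$ with leading coefficient $t^N\al_N\lc(p_N)\ne0$ for $t\ne0$, $D_N(t)$ is (up to a power of $t$) a polynomial in $t$ and hence has only finitely many positive zeros $0<\tau_1<\dots<\tau_r$. Define $t_{\textrm{crit}}(N)=\tau_r$; the existence of at least one positive critical value follows from the fact that, as noted after Proposition~\ref{prop:genrealzerosfortbig} via Hurwitz's theorem, the zeros escape to infinity in $\C$ as $t\to0^+$, which cannot happen if all zeros stayed real.

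On each open interval $(\tau_i,\tau_{i+1})$ (with $\tau_0=0$, $\tau_{r+1}=+\infty$) the zeros of $q_N(x;t)$ depend continuously on $t$ and no two collide, so the number of real zeros is locally constant. By Proposition~\ref{prop:genrealzerosfortbig} this number equals $N$ on $(\tau_r,\infty)$, which gives the easy half of the conjecture: for $t>t_{\textrm{crit}}$ all $N$ zeros are real and simple. At each $\tau_i$ one can localise the analysis via Corollary~\ref{cor2:lem:partialinverseJacobi} (with $N$ replaced by $N-1$): a double zero $x_0\in\C$ of $q_N(\cdot;\tau_i)$ is characterised by $q_N(x_0;\tau_i)=0$ together with the vanishing of $r_N(x_0;\tau_i)+s_N(x_0;\tau_i)$. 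An implicit function computation around such a point determines whether a pair of real zeros merges and splits into complex conjugates (or conversely) as $t$ crosses~$\tau_i$.

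The serious obstacle is the second half: showing that on every subinterval of $(0,\tau_r)$ the number of real zeros is strictly less than~$N$. Crossings of critical values change the count of real zeros by an even nonzero integer, so in principle a pair of complex conjugates that was born at $\tau_r$ could collide again at some smaller $\tau_i$ and become real, restoring all $N$ real zeros on $(\tau_{i-1},\tau_i)$. The conjecture asserts that this never happens, i.e.\ the number of real zeros is a monotone non-decreasing function of $t$ on $(0,\infty)$. I see no \emph{a~priori} reason for such monotonicity from the four-term recurrence of Lemma~\ref{lem:gen4termrecurforpartialsums} alone.

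A promising line of attack is to use the eigenvalue interpretation from Proposition~\ref{prop:gengenfunctionasdetofalmosttridiagmatrix}: the zeros of $q_N(x;t)$ are exactly the eigenvalues of the almost-tridiagonal matrix $P_{N-1}(t)$, whose tridiagonal core $J_{N-1}(t)$ is Jacobi (hence has only real simple eigenvalues under the Favard assumption $a_{n-1}c_n>0$), and whose deviation from Jacobi is a last-row perturbation $R_{N-1}(t)$ that decays like $t^{-1}$ as $t\to\infty$. One could try to count non-real eigenvalues by a homotopy/winding-number argument along a family interpolating $P_{N-1}(t)$ with $J_{N-1}(t)$, or via a Rouch\'e-type estimate on $\det(P_{N-1}(t)-x\Id)$. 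Turning either strategy into the required global monotonicity in $t$ is precisely what we have been unable to do, and is the reason Conjecture~\ref{conj:criticalvaluet} is stated as a conjecture rather than a theorem.
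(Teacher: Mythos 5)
The statement you are addressing is stated in the paper as Conjecture~\ref{conj:criticalvaluet}: the authors offer no proof, only numerical evidence, so there is no proof of theirs to compare yours against. Your proposal is likewise not a proof, and you say so yourself --- the entire second half of the conjecture, namely that the count of real zeros never returns to $N$ on any subinterval of $(0,t_{\textrm{crit}})$, is left open. Identifying that global monotonicity statement as the real obstruction is correct and matches what the paper implicitly concedes, but it means your proposal establishes at most the easy direction, which is essentially already contained in Proposition~\ref{prop:genrealzerosfortbig} plus the finiteness of the discriminant locus.

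Even the easy direction has a genuine gap. You define $t_{\textrm{crit}}$ as the largest positive zero $\tau_r$ of the discriminant and justify $r\geq 1$ by arguing that the zeros escape to infinity as $t\to0^+$ (Hurwitz), ``which cannot happen if all zeros stayed real.'' That inference is false: real zeros can perfectly well escape to infinity along the real axis. This already happens for $N=1$, where $q_1(\cdot;t)$ has the single real zero $b_0-a_0/(\al_1 t)$ for every $t>0$ and the discriminant never vanishes, so the conjecture as literally stated needs $N\geq 2$ at the very least. The paper records the passage of the zeros into the complex plane only as an experimental observation, not a theorem, and for $N=2$ the displayed formula for $q_2(x;t)$ shows that the sign of the discriminant for small $t$ depends on the normalisation constants $\al_1,\al_2$; hence even the existence of a single positive critical value is not automatic without further hypotheses. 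Your local-constancy argument on $(\tau_r,\infty)$ combined with Proposition~\ref{prop:genrealzerosfortbig} is sound, but it yields the first half of the conjecture only \emph{conditionally} on $\tau_r$ existing, and the second half remains entirely open.
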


Conjecture \ref{conj:criticalvaluet} suggests that there is only one critical value for $t$ for the change in behaviour of the zeros of $q_N$.
It would be of great interest to understand the behaviour of $t_{\operatorname{crit}}(N)$ as a function of~$N$.

\section{Examples}
\label{sec:examples}
In this section, we consider three examples: the partial sums of Hermite, Charlier and Lommel polynomials. The partial sums of Hermite polynomials represent the simplest case, and we describe some additional properties in this context. The Charlier polynomials provide an example with discrete orthogonality. Finally, the Lommel polynomials illustrate a family of polynomials that are not included in the Askey scheme. The plots provided in this section were generated using Python and Maple, and the codes are is available in a GitHub repository \cite{GitHub}.
Our principal theoretical sources for this part are the books~\cite{AndrAR}, \cite{Isma} and~\cite{KLS2010}.

\subsection{Partial sums of Hermite polynomials} 
\label{sec:hermite}
Our first and principal example comes from the weight $w(x)=e^{-x^2}$ on the interval $(-\infty, \infty)$. The corresponding Hermite polynomials and their generating function are given by
\[
H_n(x)=(-1)^n \left(\frac{d^n}{dx^n} w(x) \right) w(x)^{-1}, \quad \sum_{n=0}^\infty \frac{H_n(x)}{n!}t^n = e^{-t(t-2x)}.
\]
The three-term recurrence relation, the forward and backward shift operators are given by
\begin{equation}
\label{eq:Hermite-rec-shifts}
\begin{split}
xH_n(x) &= \frac12 H_{n+1}(x) + nH_{n-1}(x), \qquad \frac{d}{dx}H_n(x) = 2n H_{n-1}(x), \\
&\qquad \frac{d}{dx}H_n(x) -2x  H_n(x) = -H_{n+1}(x).
\end{split}
\end{equation}
We consider the partial sums of the Hermite polynomials with the following normalisation:
\begin{equation}
\label{eq:Hermite-partialsums}
    q_m(x;t) = \sum_{n=0}^m \frac{H_{n}(x) t^n}{n!}.
\end{equation}
Using Lemma \ref{lem:genfun} the generating function for the partial sums $q_m(x;t)$ is directly obtained:
\[\sum_{m=0}^\infty q_m(x;t)y^m = \frac{e^{-yt(yt - 2x)}}{1-y}.\]
The four-term recurrence relation for the partial sums follows directly from Lemma \ref{lem:gen4termrecurforpartialsums} and the recurrence relation of the Hermite polynomials \eqref{eq:Hermite-rec-shifts}. Taking into account that $\alpha_n = 1/n!$, $a_m=1/2$, $b_m=0$ and $c_m=m$, we get
\[
(m+1)q_{m+1}(x;t) -(2xt+(m+1)) q_m(x;t) +2 (t^2+tx) q_{m-1}(x;t) 
-2t^2q_{m-2}(x;t) = 0
\]
for $m\ge0$, with initial conditions as in Lemma~\ref{lem:gen4termrecurforpartialsums}.
In the following proposition we give a list of identities for the partial sums
$q_m(x;t)$ which follow from the properties of Hermite polynomials.

\begin{prop}
    \label{prop:qm-Hermite-identities}
    Let $q_m(x;t)$ be the partial sums of Hermite polynomials normalised as in \eqref{eq:Hermite-partialsums}. Then
    \begin{enumerate}
    \item[\textup{(1)}] The derivatives with respect to $t$ and $x$ are given by
    \begin{align*}
        \frac{\partial}{\partial x} q_m(x;t) &= 2tq_{m-1}(x;t), \\
        \frac{\partial}{\partial t} q_m(x;t) &= 2xq_{m-1}(x;t) -2t q_{m-2}(x;t).
    \end{align*}
    \item[\textup{(2)}] The partial sums are solutions to the partial differential equation
    $$t\,\frac{\partial}{\partial t} q_m(x;t) = x\, \frac{\partial}{\partial x} q_m(x;t) -\frac{1}{2}\, \frac{\partial^2}{\partial x^2} q_m(x;t)$$
    with the boundary condition $q_m(x;0) = 1$.
    \end{enumerate}
\end{prop}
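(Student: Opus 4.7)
The plan is to prove (1) and (2) by directly differentiating the defining series \eqref{eq:Hermite-partialsums} term-by-term and applying the shift relation and three-term recurrence from \eqref{eq:Hermite-rec-shifts}; then (3) follows algebraically from (1) and (2).

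For (1), I differentiate $q_m(x;t)=\sum_{n=0}^m H_n(x)t^n/n!$ in $x$ and substitute the backward-shift identity $\frac{d}{dx}H_n(x)=2nH_{n-1}(x)$. The factor of $n$ cancels the $n!$, so
\[
\frac{\partial}{\partial x}q_m(x;t)=\sum_{n=1}^m\frac{2H_{n-1}(x)\,t^n}{(n-1)!}=2t\sum_{k=0}^{m-1}\frac{H_k(x)\,t^k}{k!}=2tq_{m-1}(x;t)
\]
after reindexing $k=n-1$.

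For (2), I differentiate in $t$ and rewrite $H_n(x)$ using the three-term recurrence of \eqref{eq:Hermite-rec-shifts} in the rearranged form $H_n(x)=2xH_{n-1}(x)-2(n-1)H_{n-2}(x)$. Splitting the resulting sum into two pieces and reindexing each (the first by $k=n-1$, the second by $k=n-2$, noting that $H_{-1}=0$ so the lower boundary is harmless) yields the two terms $2xq_{m-1}(x;t)$ and $-2tq_{m-2}(x;t)$.

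For (3), I iterate (1): $\frac{\partial^2}{\partial x^2}q_m=2t\,\frac{\partial}{\partial x}q_{m-1}=4t^2q_{m-2}$. Then
\[
x\,\frac{\partial}{\partial x}q_m-\tfrac12\,\frac{\partial^2}{\partial x^2}q_m=2xtq_{m-1}-2t^2q_{m-2}=t\bigl(2xq_{m-1}-2tq_{m-2}\bigr)=t\,\frac{\partial}{\partial t}q_m
\]
by (2). The boundary condition $q_m(x;0)=1$ is immediate, since only the $n=0$ term in \eqref{eq:Hermite-partialsums} survives at $t=0$. There is no real obstacle here; the only points requiring mild care are the reindexing of the sums and the convention $q_{-1}=q_{-2}=0$ so that the identities extend to the boundary values of $m$.
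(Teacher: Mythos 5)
Your proof is correct and matches the paper's approach: the paper states only that these identities ``follow from the properties of Hermite polynomials'' and gives no further details, and your term-by-term differentiation using the derivative formula and the three-term recurrence from \eqref{eq:Hermite-rec-shifts}, followed by the algebraic combination for the PDE, is exactly that argument carried out.
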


\begin{figure}[t]
    \centering
    \begin{subfigure}[t]{0.325\textwidth}
        \centering
        \includegraphics[width=\textwidth]{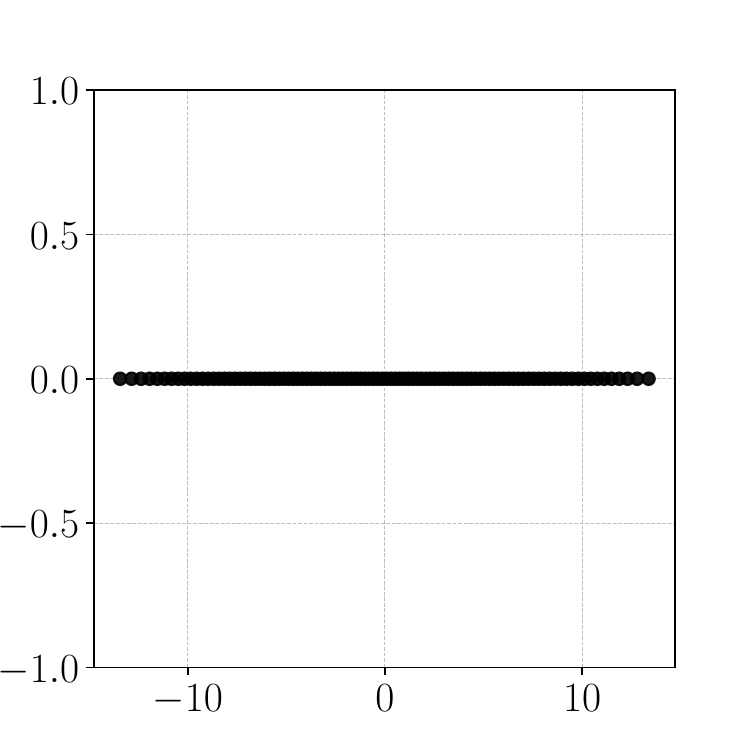}
    \end{subfigure}
    \hfill
    \begin{subfigure}[t]{0.325\textwidth}
        \centering
        \includegraphics[width=\textwidth]{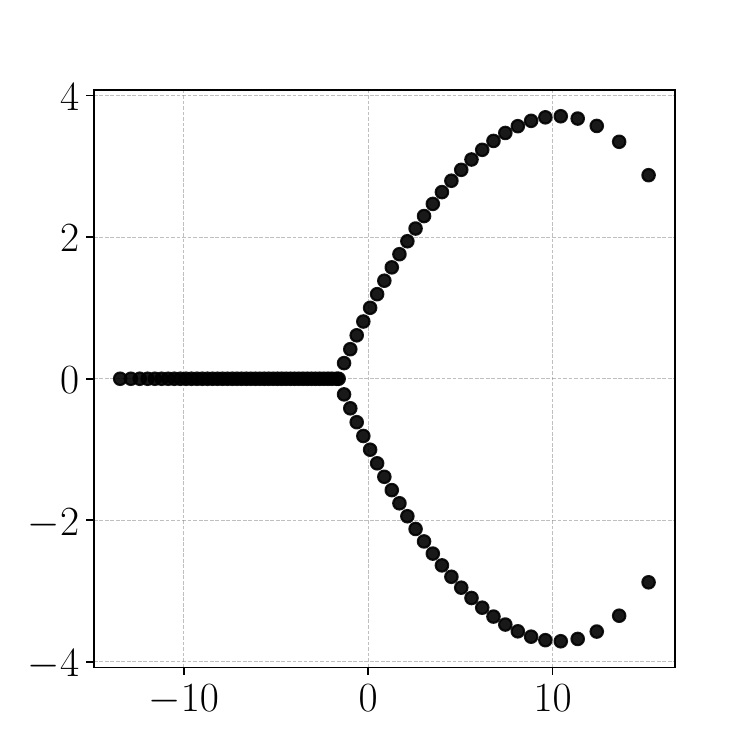}
    \end{subfigure}
    \begin{subfigure}[t]{0.325\textwidth}
        \centering
        \includegraphics[width=\textwidth]{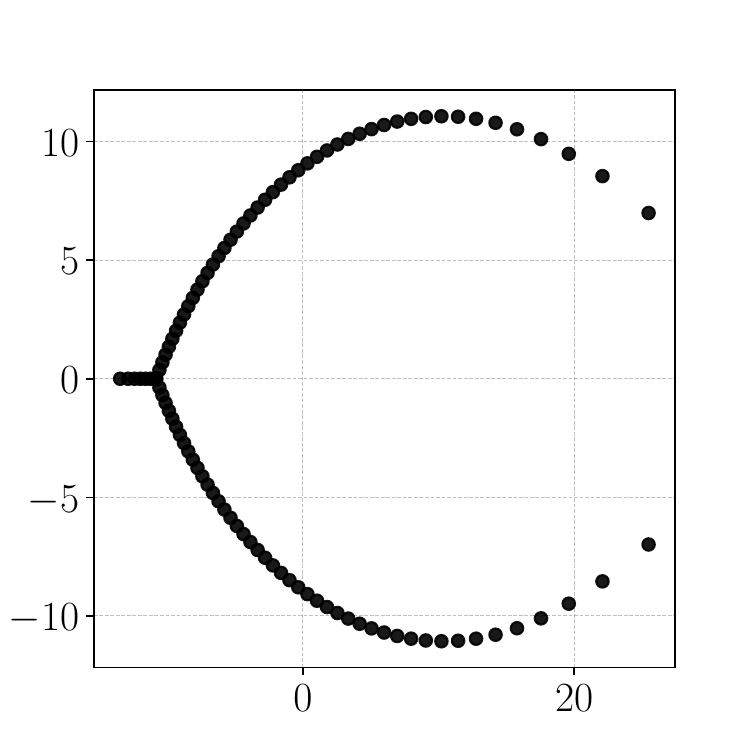}
    \end{subfigure}
    \caption{
        Zeros of the partial sums of Hermite polynomials $q_m(x;t)$ for $m=100$; $t=20$ (left), $t=3.48$ (center) and $t=1.69$ (right).
    }
    \label{fig:Hermite-zeros-3ts}
\end{figure}

Next we discuss the zeros of the partial sum polynomials. From Proposition \ref{prop:genrealzerosfortbig} we know that the zeros of $q_m(x;t)$ are real and simple for large $t$, see Figure \ref{fig:Hermite-zeros-3ts} (left). Moreover, in this regime, the zeros of $q_m(x;t)$ are close to the zeros of the $m$-th Hermite polynomial $H_m(x)$. Conjecture \ref{conj:criticalvaluet} is verified numerically for this example. We find it convenient to consider the rescaled partial sums
\begin{equation}
\label{eq:rescaled_hermite_partsums}
\widetilde q_m(x;t) = q(\sqrt{m}x;\sqrt{m}t).
\end{equation}
With this normalisation the zeros remain bounded for large $t$ and the critical $t$ seems to converge to a fixed value as $m\to \infty$. As $t$ approaches the critical value $t_{\textrm{crit}}$, the two largest zeros of $\widetilde q_m(x;t)$ get closer and finally collide at $t=t_{\textrm{crit}}$. For $t<t_{\textrm{crit}}$ these zeros move into the complex plane. The trajectories of these zeros and their collision are depicted in Figure~\ref{fig:HermiteCharlierN10}.

For the partial sums of Hermite polynomials, we  have a particular structure for the zeros. 

\begin{prop}
    \label{prop:double_zeros_qm_Hermite}
    Let $x_0,t_0\in \mathbb{R}$ be such that $q_m(x;t_0)$ has a double zero at $x=x_0$. Then $x_0$ is a zero of $H_m(x)$.
\end{prop}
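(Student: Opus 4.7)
The plan is to exploit the differential identities of Proposition~\ref{prop:qm-Hermite-identities}(1) for the partial sums $q_m(x;t)$ together with the definition \eqref{eq:Hermite-partialsums}. The key observation is that a double zero of $x\mapsto q_m(x;t_0)$ at $x_0$ means that both $q_m$ and its $x$-derivative vanish there, and the derivative has a very clean expression in terms of $q_{m-1}$.

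More concretely, first I would dispose of the degenerate case $t_0=0$: here $q_m(x;0)=1$ has no zeros at all, so we may assume $t_0\neq0$. Then, using Proposition~\ref{prop:qm-Hermite-identities}(1), the condition that $x_0$ is a double zero of $q_m(\,\cdot\,;t_0)$ translates into the two equations
\[
q_m(x_0;t_0)=0,\qquad 0=\frac{\partial}{\partial x}q_m(x;t_0)\Big|_{x=x_0}=2t_0\,q_{m-1}(x_0;t_0),
\]
so that $q_{m-1}(x_0;t_0)=0$ as well.

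Finally, from the definition \eqref{eq:Hermite-partialsums} we have the trivial telescoping identity
\[
q_m(x;t)-q_{m-1}(x;t)=\frac{H_m(x)\,t^m}{m!}.
\]
Evaluating at $(x_0,t_0)$ and using the two vanishings just derived, the left-hand side is zero, hence $H_m(x_0)\,t_0^{\,m}/m!=0$, and since $t_0\ne0$ this forces $H_m(x_0)=0$, as desired (I read the statement's ``$H_n(x)$'' as a typo for $H_m(x)$). There is no real obstacle here; the whole argument is a direct consequence of the shift identity $\partial_x q_m=2t\,q_{m-1}$, which in turn is essentially the forward shift relation $\frac{d}{dx}H_n=2nH_{n-1}$ in \eqref{eq:Hermite-rec-shifts}. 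The only mild subtlety is excluding $t_0=0$, which is immediate.
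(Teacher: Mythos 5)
Your argument is correct and is essentially identical to the paper's own proof: both use the lowering identity $\partial_x q_m = 2t\,q_{m-1}$ from Proposition~\ref{prop:qm-Hermite-identities} to deduce $q_{m-1}(x_0;t_0)=0$, and then the telescoping relation $q_m-q_{m-1}=t^m H_m(x)/m!$ to conclude $H_m(x_0)=0$ (the paper writes this as $q_m(x_0;t_0)-q'_m(x_0;t_0)/(2t_0)$). Your explicit exclusion of $t_0=0$ is a minor tidying of a point the paper handles by its standing assumption $t>0$; you also correctly read the statement's $H_n$ as a typo for $H_m$.
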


\begin{proof}
The proof is a direct consequence of Proposition \ref{prop:qm-Hermite-identities}. Since $q_m(x;t_0)$ has a double zero at $x_0$, we have that $q_m(x_0;t_0) = 0$, and $q'_m(x_0;t_0) = 2t_0 q_{m-1}(x_0;t_0)=0$. Hence
$$0=q_m(x_0;t_0) - \frac{q'_m(x_0;t_0)}{2t_0} = \frac{t_0^{m}}{m!}H_m(x_0).$$
Since $t_0>0$, the proof is complete.
\end{proof}

Proposition \ref{prop:double_zeros_qm_Hermite} tells us that the double zeros of $\widetilde q_m(x;t)$ can only occur at a zero of the Hermite polynomial $H_m(\sqrt{m}x)$; this can be seen in Figure~\ref{fig:HermiteCharlierN10}. We observe that the trajectories of the zeros of $q_m(x;t)$ start at the zeros of $H_m(\sqrt{m}x)$ for large $t$ and the collisions occur exactly at half of the zeros of $H_m(\sqrt{m}x)$. This phenomenon does not seem to be present in other examples and it is only present in the case of partial sums of Hermite polynomials with the specific normalisation \eqref{eq:Hermite-partialsums}.

\begin{figure}[t]
    \centering
    \begin{subfigure}[t]{0.49\textwidth}
        \centering
        \includegraphics[width=\textwidth]{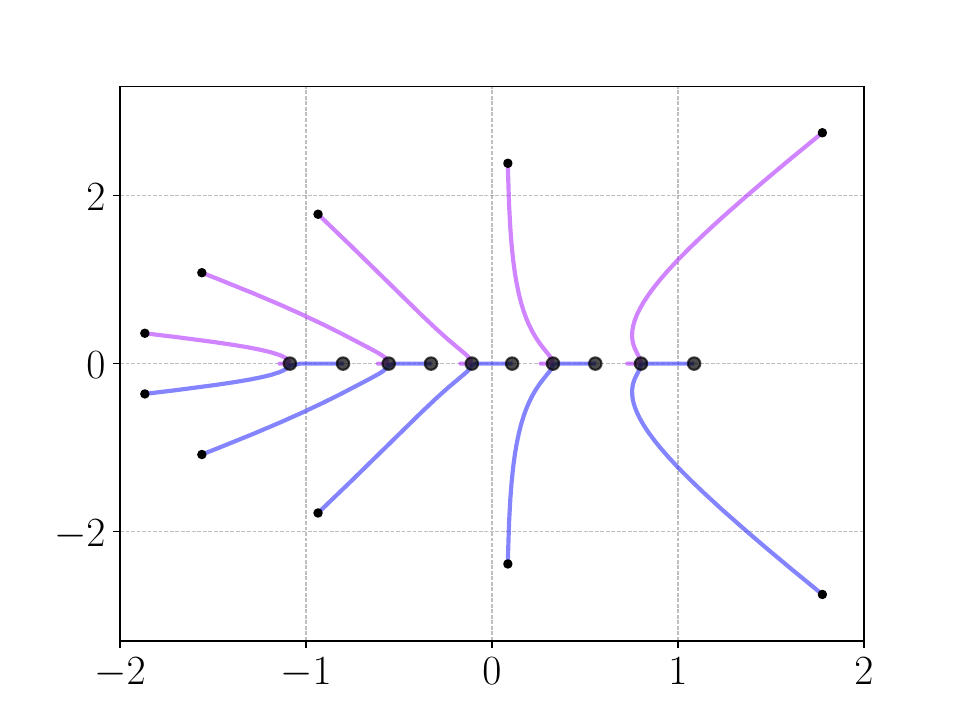}
    \end{subfigure}
    \hfill
    \begin{subfigure}[t]{0.49\textwidth}
        \centering
        \includegraphics[width=\textwidth]{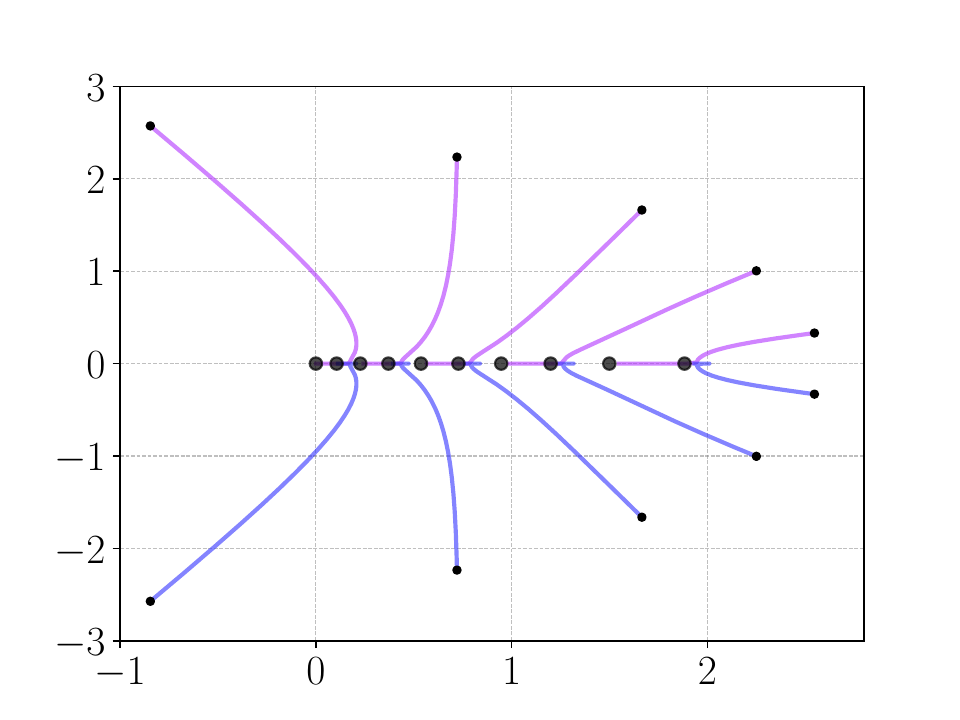}
    \end{subfigure}
    \caption{
        \textsl{Left}: Zeros of rescaled partial sums of Hermite polynomials. The small circles in the complex plane are the zeros of $\widetilde q_m(x;t_{\max})$. The blue and pink lines are the trajectories of these zeros in the range $t_{\min}$ to $t_{\max}$. The large circles on the real line are the zeros of the classical Hermite  polynomials $H_m(\sqrt{m} \, x)$. In this example: $m=10$, $t_{\max}=6$, $t_{\min}=0.1$.\\
        \textsl{Right}: Zeros of partial sums of rescaled Charlier polynomials $q^{(3)}_{m}(m x;t)$ from $t_{\min}$ to $t_{\max}$. The large circles on the real line are the zeros of the classical Charlier  polynomials $C_{m}^{(3)}(m x)$. In this example: $m=10$, $t_{\max}=6$, $t_{\min}=0.57$.}
        \label{fig:HermiteCharlierN10}
\end{figure}

For each $m\in \mathbb{N}_0$, the critical time $t_{\textrm{crit}}$ occurs when the first two zeros of the partial sums collide. Note that $\widetilde q_m(x;t)$ does not have double zeros for $t>t_{\textrm{crit}}$ and that, as a function of $x$, the polynomial $\widetilde q_m(x;t_{\textrm{crit}})$ has a double zero at a point $x_0$. By Proposition \ref{prop:double_zeros_qm_Hermite}, $x_0$ is a zero of the rescaled Hermite polynomial $H_m(\sqrt{m}x)$. If we fix $x=x_0$, the discriminant of $\widetilde q_m(x;t)$ has a zero at $t=t_{\textrm{crit}}$. Taking these into account, we can write a method to compute numerically $t_{\textrm{crit}}$:
\begin{enumerate}
    \item Compute the discriminant $\Delta(x;t)$ of $\widetilde q_m(x;t)$.
    \item For each of the zeros $x_0$ of $H_m(\sqrt{m}x)$, solve $\Delta(x_0;t)=0$ for the variable $t$.
    \item The largest solution of the previous step is precisely the critical time $t_{\textrm{crit}}$.
\end{enumerate}
Steps (2) and (3) can be simplified by observing that the first collision seems to occur at the the second largest zero of $H_m(\sqrt{m}x)$ so that we only need to solve $\Delta(x_0;t)=0$ for this case. In Table \ref{tab:critical-values-Hermite} we give the values of $t_{\textrm{crit}}$ for different degrees.

\begin{table}[ht]
    \centering
    \begin{tabular}{ccc}
        \toprule
        $m$ & $x_0$ & $t_{\textrm{crit}}$ \\
        \midrule
        10  & 0.800920079 & 0.6926318429 \\
        20  & 1.029414690 & 0.7190535658\\
        50  & 1.205301838 & 0.7334164664 \\
        100 & 1.282379975 & 0.7360578398 \\
        150 & 1.313478054 & 0.7358415083 \\
        \bottomrule
    \end{tabular}
    \captionsetup{format=plain, labelfont=bf, textfont=it}
    \caption{Critical values $t_{\textrm{crit}}$ for the rescaled partial sums of Hermite polynomials \eqref{eq:rescaled_hermite_partsums} and the second largest zero $x_0$ of the Hermite polynomial $H_m(\sqrt{m}x)$ for different degrees.}
    \label{tab:critical-values-Hermite}
\end{table}

\subsection{Partial sums of Charlier polynomials}
\label{sec:charlier}
Charlier polynomials $C^{(a)}_n(x)$ are orthogonal with respect to the weight function $e^{-a} a^x/x!$ on the non-negative integers, where $a>0$ is a parameter. Charlier polynomials and their generating function are given by:
\[
C^{(a)}_n(x) = \nabla^n \left( \frac{a^x}{x!} \right)\frac{x!}{a^x}, \qquad \sum_{n=0}^\infty C^{(a)}_n(x) \frac{t^n}{n!} = e^t \left(1 - \frac{t}{a}\right)^x.
\]
The three-term recurrence relation and the backward shift operator are:
$$xC^{(a)}_n(x) = -a C^{(a)}_{n+1}(x) + (n+a) C^{(a)}_{n}(x) - n C^{(a)}_{n-1}(x), \qquad \Delta_x C^{(a)}_n(x) = -\frac{n}{a} C^{(a)}_{n-1}(x).$$
We consider the following normalised partial sums.
\begin{equation}
\label{eq:Charlier-partialsums}
    q^{(a)}_m(x;t) = \sum_{n=0}^m \frac{C^{(a)}_{n}(x) t^n}{n!}.
\end{equation}
By Lemma \ref{lem:genfun}, the generating function of the Charlier partial sums is
\begin{equation}
\label{eq:Charlier-ps-generating}
\sum_{m=0}^\infty q^{(a)}_m(x;t)y^m = \frac{e^{ty}\left( 1-\frac{ty}{a}\right)^x }{1-y}.
\end{equation}
The four-term recurrence relation for the partial sums follows from Lemma \ref{lem:gen4termrecurforpartialsums} and the recurrence relation of the Charlier polynomials:
\begin{multline*}
-a(m+1) q^{(a)}_{m+1}(x;t) + \big(a(m + t + 1) + t(m - x)\big) q^{(a)}_{m}(x;t)
\\
-t(t + m + a - x) q^{(a)}_{m-1}(x;t) + t^2 q^{(a)}_{m-2}(x;t) = 0
\end{multline*}
for $m\ge0$, with initial conditions as in Lemma~\ref{lem:gen4termrecurforpartialsums}.
As in the case of the partial sums of Hermite polynomials, we have a simple lowering operator:
$$\Delta_x q_m^{(a)}(x;t) = -\frac{t}{a} q_{m-1}^{(a)}(x;t).$$
Proposition \ref{prop:double_zeros_qm_Hermite} is no longer valid for Charlier polynomials. One of the consequences of this fact is that the double zeros of $q_m^{(a)}(x;t)$ do not occur at a zero of a Charlier polynomial. This can be observed in Figure \ref{fig:HermiteCharlierN10}. The blue trajectories start at the even zeros (ordered from smallest to largest) of the Charlier polynomial $C^{(a)}_{10}(x)$, the pink trajectories start at the odd zeros. In contrast to the case of the partial sums of Hermite polynomials, the trajectories of the zeros of the partial sums of Charlier polynomials do not intersect at a zero of a Charlier polynomial.

\subsection{Partial sums of Lommel polynomials}
\label{sec:lommel}

\begin{figure}[t]
    \centering
    \begin{subfigure}[t]{0.325\textwidth}
        \centering
        \includegraphics[width=\textwidth]{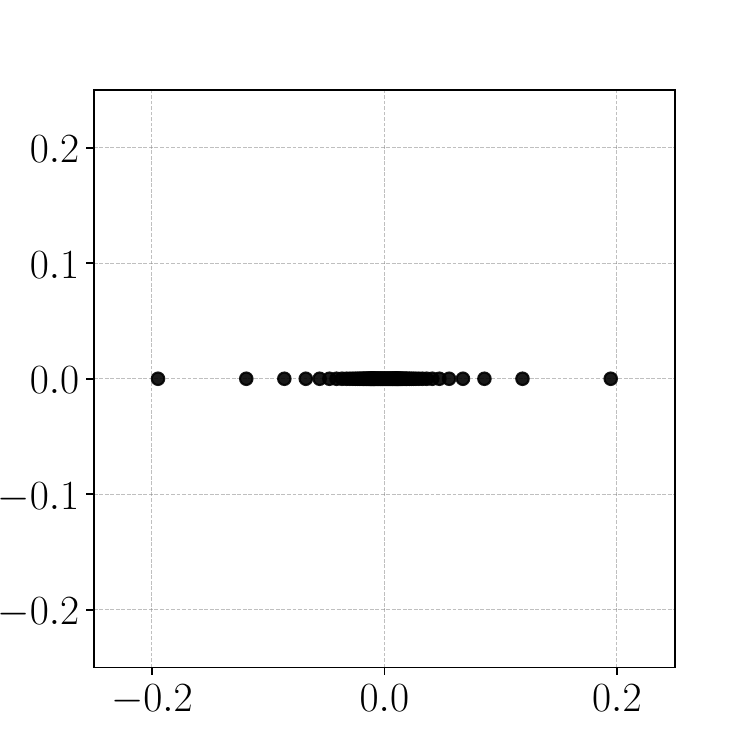}
    \end{subfigure}
    \hfill
    \begin{subfigure}[t]{0.325\textwidth}
        \centering
        \includegraphics[width=\textwidth]{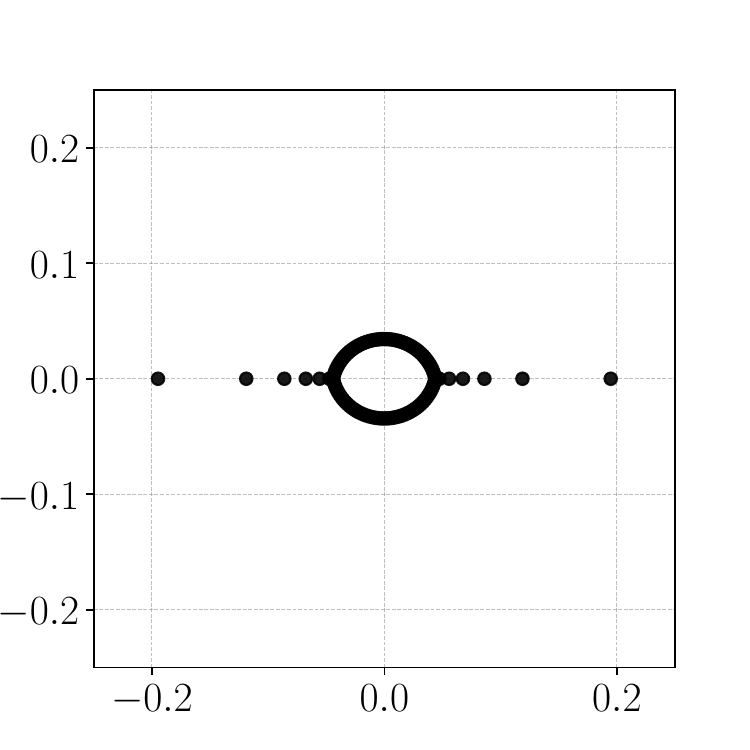}
    \end{subfigure}
    \begin{subfigure}[t]{0.325\textwidth}
        \centering
        \includegraphics[width=\textwidth]{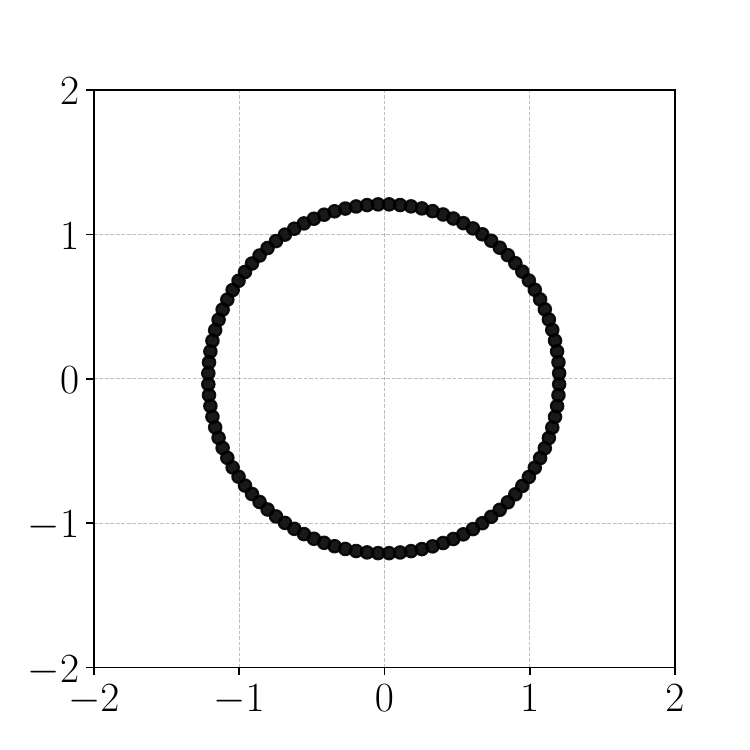}
    \end{subfigure}
    \caption{
        Zeros of the partial sums of Lommel polynomials $q^{(3)}_m(x;t)$ for $m=100$; $t=1.8$ (left), $t=0.286$ (center) and $t=0.01$ (right).
    }
    \label{fig:Lommel-zeros-3ts}
\end{figure}

The Lommel polynomials $h_{m+1,\nu}(x)$ are a class of orthogonal polynomials which depend on one parameter $\nu>0$. Unlike Hermite and Charlier polynomials, Lommel polynomials are not part of the Askey scheme.

Lommel polynomials and their generating function are given by
\[
h_{m+1,\nu}(x)  = \sum_{n=0}^{\left\lfloor m/2 \right\rfloor} \frac{(-1)^n (m - n)! \,\Gamma(\nu + m - n)}{n!\,(m - 2n)!\,\Gamma(\nu + n)} \left( \frac{1}{2x} \right)^{2n - m},
\]
while the three-term recurrence relation for them is
\[
h_{m+1,\nu}(x) = 2x(m + \nu) h_{m,\nu}(x) - h_{m-1,\nu}(x), \qquad h_{-1,\nu}(x) = 0, \quad h_{0,\nu}(x) = 1.
\]
We consider the following partial sums of Lommel polynomials:
\begin{equation}
\label{eq:Lommel-partialsums}
    q^{(\nu)}_m(x;t) = \sum_{n=0}^m h_{m,\nu}(x) t^n.
\end{equation}
The four-term recurrence relation for the partial sums follows from Lemma \ref{lem:gen4termrecurforpartialsums} and the recurrence relation of the Lommel polynomials:
\begin{equation*}
    \label{eq:4term-Lommel}
      q^{(\nu)}_{m+1}(x;t) -\big(1 + 2tx(m + \nu)\big) q^{(\nu)}_{m}(x;t) + 2t\Big(\frac{t}{2} + x(m + \nu)\Big) q^{(\nu)}_{m-1}(x;t) -t^2 q^{(\nu)}_{m-2}(x;t) = 0
\end{equation*}
for $m\ge0$, with initial conditions as in Lemma~\ref{lem:gen4termrecurforpartialsums}.
As in the case of the partial sums of  Charlier polynomials the double zeros of \( q_m^{(\nu)}(x;t) \) do not occur at zeros of a Lommel polynomial.

\subsection{The limit as $t\to 0$ and the connection with the Szeg\H{o} curve}
\label{sec:szego}

\begin{figure}[t]
    \centering
    \begin{subfigure}[t]{0.49\textwidth}
        \centering
        \includegraphics[width=\textwidth]{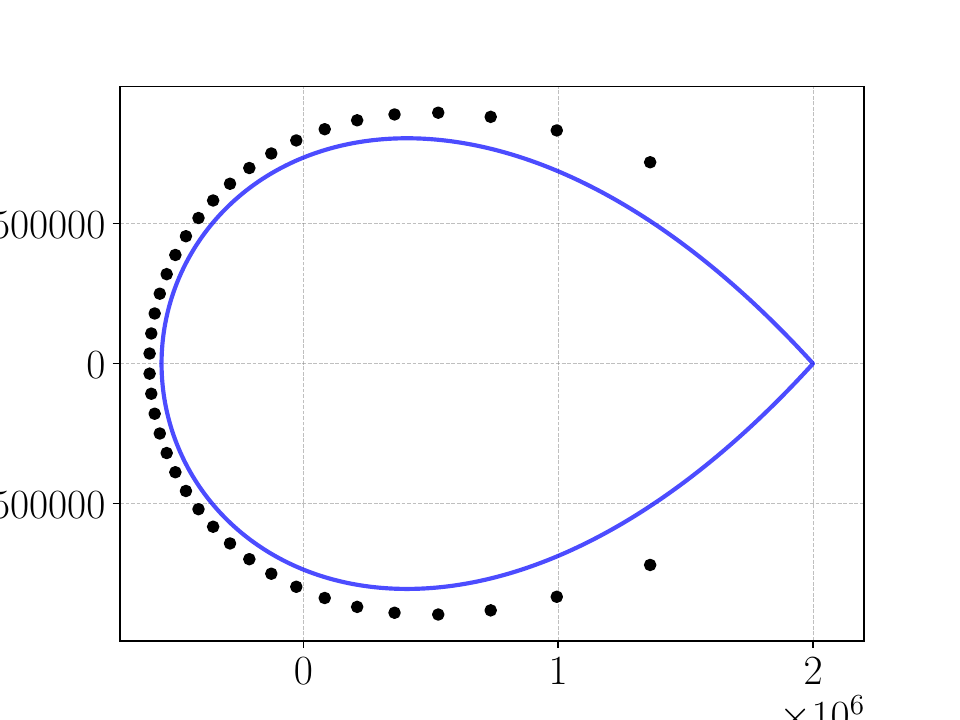}
    \end{subfigure}
    \hfill
    \begin{subfigure}[t]{0.49\textwidth}
        \centering
        \includegraphics[width=\textwidth]{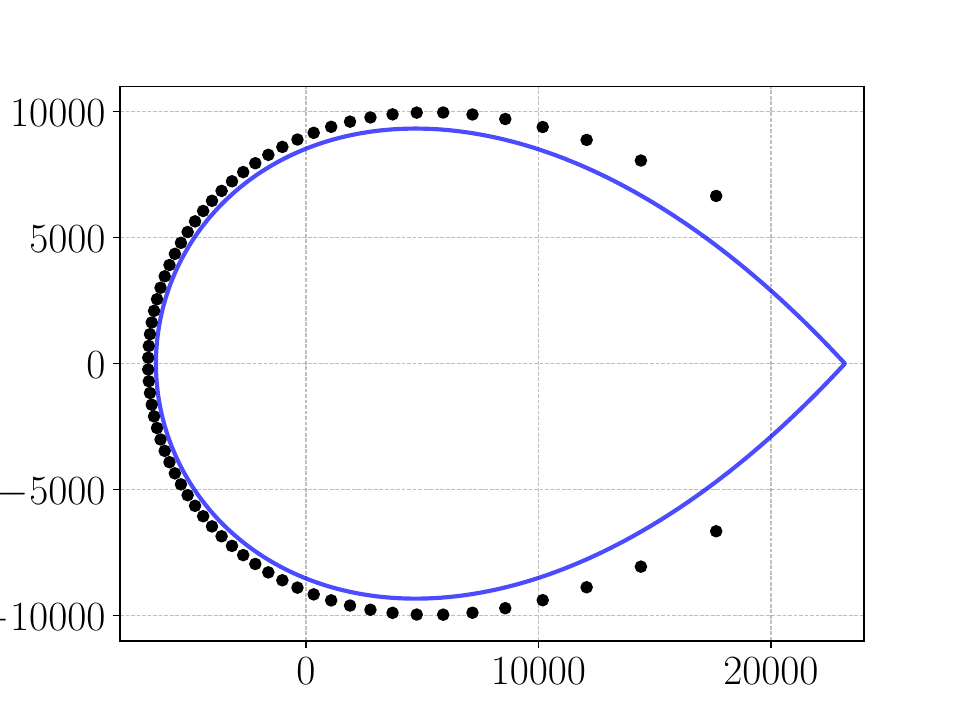}
    \end{subfigure}
    \caption{
        Zeros of the partial sums of Hermite polynomials $q_m(x;t)$ and the scaled Szeg\H{o} curve $|ze^{1-z}|=1$ where $z=2xt/m$ for $t=0.00001$ and $m=40$ (left), $m=70$ (right).
    }
    \label{fig:HermiteSzegoN40N70}
\end{figure}

With the help of Lemma \ref{lem:gen4termrecurforpartialsums} we can write the generating function for the partial sums of the exponential function,
\[
S_m(t)=\sum_{n=0}^m\frac{t^n}{n!}.
\]
The result is
\[
\sum_{m=0}^\infty S_m(t)y^m=\frac{e^{ty}}{1-y}.
\]
On the other hand, for the generating function of the partial sums \eqref{eq:Hermite-partialsums} of the Hermite polynomials we get
\[
\sum_{m=0}^\infty q_m(x;t)y^m
= \frac{e^{-yt(yt - 2x)}}{1-y}
=\frac{e^{-y^2t^2+2xyt}}{1-y}
=e^{-y^2t^2}\sum_{l=0}^\infty S_l(2xt)y^l.
\]
Comparing the terms in the $y$-expansions on the two sides implies
\[
q_m(x;t)
=\sum_{k=0}^{\lfloor m/2\rfloor}\frac{(-1)^kt^{2k}}{k!}\,S_{m-2k}(2xt).
\]
When $t>0$ is chosen sufficiently small (but fixed), we have $q_m(x;t)=S_m(2xt)+O(t^2)$, so that the zeroes of $q_m(x;t)$ are close to the corresponding zeroes of $S_m(2xt)$.
The latter are distributed over the Szeg\H{o} curve $|ze^{1-z}|=1$ where $z=2xt/m$;
thus, if $t\to0^+$ and $x_1^m(t),\dots,x_m^m(t)$ denote the zeros of $q_m(x;t)$, then $z_j=2tx_j^m(t)/m$ accumulate on the closed loop of the curve $|ze^{1-z}|=1$. This is illustrated in Figure \ref{fig:HermiteSzegoN40N70}.

\begin{figure}[t]
    \centering
    \begin{subfigure}[t]{0.49\textwidth}
        \centering
        \includegraphics[width=\textwidth]{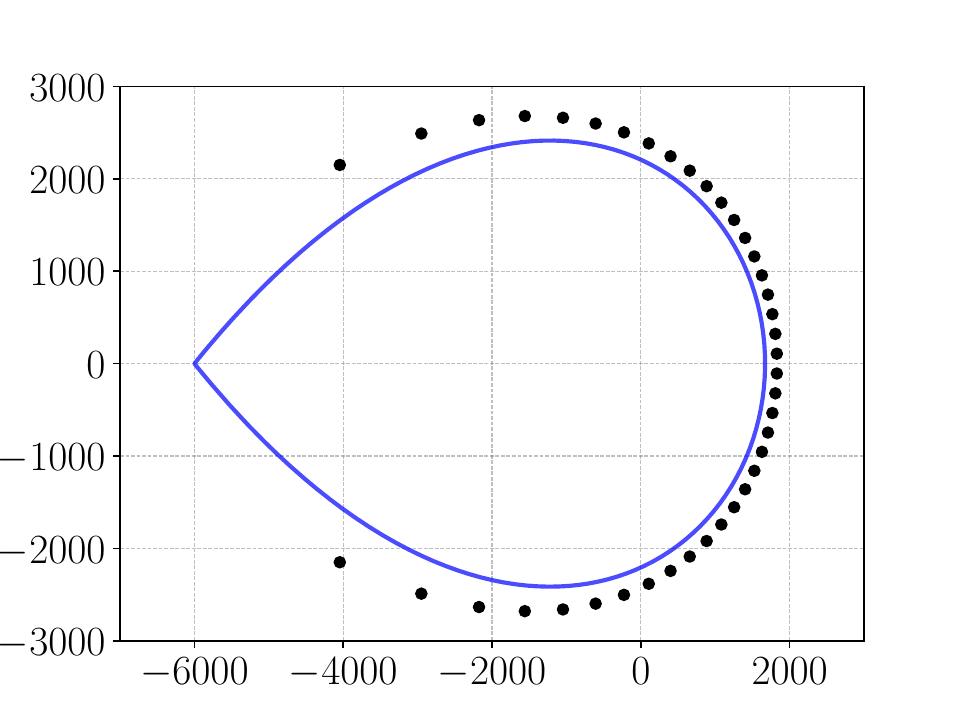}
    \end{subfigure}
    \hfill
    \begin{subfigure}[t]{0.49\textwidth}
        \centering
        \includegraphics[width=\textwidth]{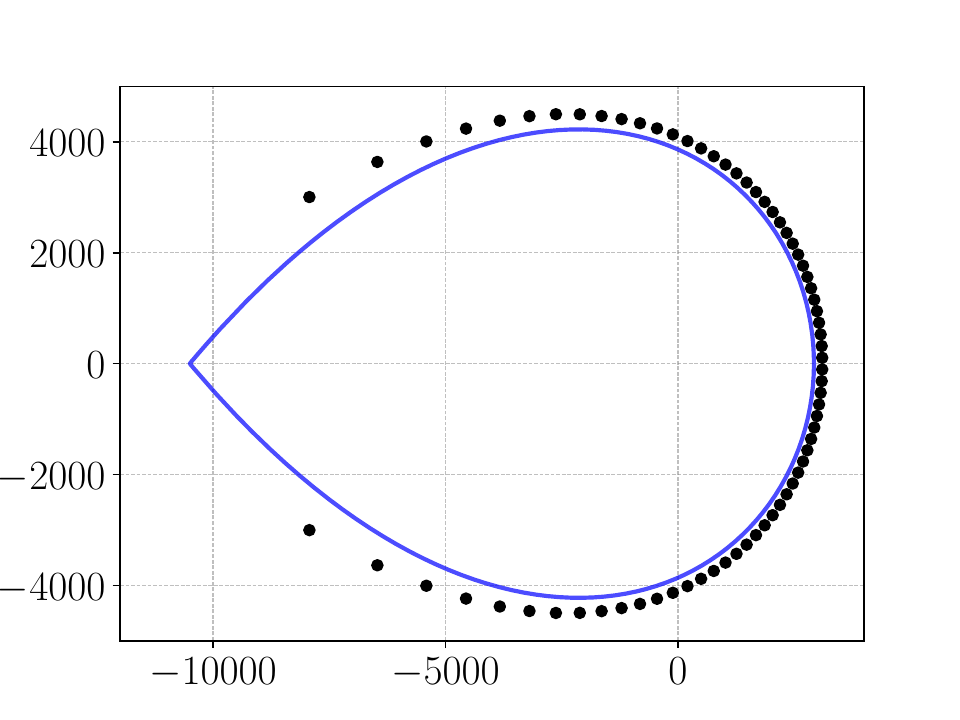}
    \end{subfigure}
    \caption{
        Zeros of the partial sums of Charlier polynomials $q^{(a)}_m(x;t)$ and the scaled Szeg\H{o} curve $|ze^{1-z}|=1$ where $z=-xt/am$ for $a=3$, $t=0.000001$ and $m=40$ (left), $m=70$ (right).
    }
    \label{fig:CharlierN40N70}
\end{figure}

\medskip
The phenomenon of such a zero distribution along the Szeg\H{o} curve pertains, after the choice of a suitable regime of parameters' dependence, for the partial sums of other families of orthogonal polynomials whose generating functions are intrinsically linked with the exponential function. This is, for example, the case for partial sums of  Charlier polynomials treated in Subsection~\ref{sec:charlier}. From the generating function of the partial sums \eqref{eq:Charlier-ps-generating} of Charlier polynomials we have
\begin{align*}
\sum_{m=0}^\infty q^{(a)}_m(x;t) y^m &= \frac{e^{ty}\left( 1-\frac{ty}{a} \right)^x }{1-y} = 
 \frac{e^{ty}\left( 1-\frac{ty}{a} \right)^x }{1-y} =  \frac{e^{ty} }{1-y} \exp \left(-\sum_{n=1}^\infty \frac{xt^ny^n}{na^n} \right) \\
 & \sim \frac{e^{ty} }{1-y} e^{-\frac{xty}{a} } = e^{ty} \sum_{\ell=0}^\infty S_\ell\left(-\frac{xt}{a}\right)y^\ell.
\end{align*}
In the second line of the above equation, we approximate the infinite series by considering only the first term. From these considerations, similarly to the case of the partial sums of Hermite polynomials, we find that $q^{(a)}_m(x;t) \sim S_m\left(-\frac{xt}{a}\right) + O(t)$. Hence, if $t\to0^+$ and $x_1^m(t),\dots,x_m^m(t)$ denote the zeros of $q^{(a)}_m(x;t)$, we expect that $z_j=-tx_j^m(t)/am$ accumulate on the closed loop of the curve $|ze^{1-z}|=1$. In Figure \ref{fig:CharlierN40N70} we numerically verify this behavior for a particular choice of the parameter~$a$.

\section{Conclusive remarks}
\label{sec:5}

Originally we came across very particular partial sums of Gegenbauer polynomials in our study of the zeros of entries of matrix-valued orthogonal polynomials \cite{KRZ24}. The evidence of special distribution of those zeros prompted us to look in the literature for such situations; unfortunately, not so much in this direction is ever discussed\,---\,the paper \cite{TranZ} is a rare example.

At the same time, our partial sums of Hermite polynomials from Subsection~\ref{sec:hermite} share numerous similarities with the so-called heat polynomials \cite{RW58}; these partial sums also show up in the context of the empirical distribution of zeros of an algebraic polynomial \cite{HHJK24}.
Therefore, making the intertwining connections explicit may help understanding related structures of discrete and mixed continuous-discrete analogs of the heat equation.

It seems to us that partial sums of orthogonal polynomials have a life of their own, and many of its episodes deserve careful attention.


\end{document}